\theoremstyle{plain}
\newtheorem{theorem}{Theorem}[section]
\newtheorem{corollary}[theorem]{Corollary}
\newtheorem{lemma}[theorem]{Lemma}
\newtheorem{proposition}[theorem]{Proposition}
\theoremstyle{definition}
\newtheorem*{question}{Question}
\theoremstyle{remark}
\newtheorem{remark}[theorem]{Remark}
\newcommand{\into}{\hookrightarrow}
\newcommand{\Z}{\mathbb{Z}}
\newcommand{\Q}{\mathbb{Q}}
\newcommand{\R}{\mathbb{R}}
\newcommand{\F}{\mathbb{F}}
\newcommand{\C}{\mathbb{C}}
\newcommand{\bd}{\partial}
\newcommand{\mc}[1]{\mathcal{#1}}
\newcommand{\dlim}{\varinjlim}
\newcommand{\ilim}{\varprojlim}
\newcommand{\mf}{\mathfrak}
\begin{document}

\title{Intersection homology with field coefficients: \\ $K$-Witt spaces and $K$-Witt bordism}
\author{Greg Friedman\\Texas Christian University}
\date{April 16, 2008}
\maketitle

\begin{abstract}
We construct geometric examples of pseudomanifolds that satisfy the Witt condition for intersection homology Poincar\'e duality with respect to certain fields but not others. We also compute the bordism theory of $K$-Witt spaces for an arbitrary field $K$, extending results of Siegel for $K=\Q$.  
\end{abstract}

\textbf{2000 Mathematics Subject Classification: }  55N33, 57Q20, 57N80 

\textbf{Keywords: } intersection homology, universal coefficient theorem, Witt space, Witt bordism

\tableofcontents

\section{Introduction}

This paper consists of two related parts: In the first part, we provide some examples of the phenomena that arise when considering intersection homology over coefficient groups with torsion, including various forms of violation of the universal coefficient theorem and spaces that satisfy the \emph{Witt condition} for certain fields but not others (and hence possess Poincar\'e duality with respect to certain fields but not others). In the second part, we compute the bordism theory of spaces that are $K$-Witt for an arbitrary field $K$. Let us explain what all this means by providing some rough definitions and context; more precise background details can be found below in Section \ref{S: background}.

Intersection homology groups were developed by Goresky and MacPherson in \cite{GM1} for the purpose of extending Poincar\'e duality and ensuing invariants to non-manifold spaces. In \cite{GM1}, this is accomplished, \emph{over rational coefficients}, for the class of (compact, oriented) piecewise linear pseudomanifolds, a class of spaces including all projective complex varieties. Using sheaf theory, this duality was later expanded \cite{GM2} to the broader class of topological pseudomanifolds and to coefficients in any field. Other generalizations followed; see, e.g. \cite{GBF17}.

The dual pairing of intersection homology, in generality,  pairs intersection homology groups with dual indices (as in the familiar case of manifolds) \emph{and} with dual sets of perversity parameters. Thus the general duality result  for an $n$-dimensional pseudomanifold asserts that there is a perfect (nonsingular) pairing

$$I^{\bar p}H_i(X;K)\times I^{\bar q}H_{n-i}(X;K)\to K.$$
Here $IH$ denotes the intersection homology groups, $K$ is a coefficient field, and $\bar p$ and $\bar q$ are dual sets of \emph{perversity parameters} that occur as one of the inputs to the theory; see Section \ref{S: background}, below, for details. 

Ideally, however, one would like a little more. For $2k$-dimensional spaces, one would like a $(-1)^k$-symmetric self-pairing on $I^{\bar p}H_k(X;K)$. This would yield signatures, elements of Witt groups, and other further algebraic information. Unfortunately, this is not possible in general, but there are dual ``middle perversities'', $\bar m$ and $\bar n$, and certain spaces such that $I^{\bar m}H_i(X;K)\cong I^{\bar n}H_i(X;K)$, in which case we obtain the desired middle dimensional form. 

It was recognized early on (right in \cite{GM1}) that pseudomanifolds with only even codimension singularities possess this form of self-duality. This class was soon generalized by Siegel \cite{Si83} to a class of spaces he dubbed Witt spaces, and which we will more specifically call $\Q$-Witt spaces. These spaces are identified by certain local intersection homology conditions, and they possess the middle-dimensional self-duality over $\Q$. Siegel further computed the bordism groups of these spaces, showing that in nontrivial cases they equal the Witt group $W(\Q)$ - hence the name ``Witt spaces'' - and that the resulting bordism homology theory provides  a geometric formulation of $KO$-homology at odd primes.

Banagl \cite{Ba02} has since extended duality even further by identifying conditions on which non-Witt spaces possess self-duality (conditions equivalent to the existence of certain towers of Lagrangian structures on strata), but Witt spaces remain an important class of examples defined by a relatively tractable condition. 

This brings us to intersection homology with coefficients. Unlike ordinary homology theories, intersection homology does not, in general,  possess a universal coefficient theorem  (though Goresky and Siegel \cite{GS83} have shown, using the Deligne sheaf formulation of intersection homology, that a universal coefficient sequence will occur if a space possesses certain local torsion properties). This has not prevented important work employing intersection homology with coefficient fields of finite characteristic, for instance there is a version of the Weil conjecture for singular varieties using $\ell$-adic intersection homology (see \cite[Chapter 10]{KirWoo}). However, intersection homology groups with different coefficients must be treated in their own right, without any clear connections between them. In particular, spaces that satisfy intersection homology Poincar\'e duality with one set of coefficients may not possess duality with respect to other sets of coefficients. 

Our first goal is to provide some examples of these phenomena. We produce concrete examples of spaces where the universal coefficient theorem breaks down (in different ways), and we present spaces that are $K$-Witt (and hence possess self-duality) with respect to some coefficient fields $K$ but not others. Our arguments and constructions are purely geometric, avoiding sheaf theory in favor of hands-on examination of intersection chains. 

 The following facts will be demonstrated throughout Section 
\ref{S: spaces}
 (except for the first, which is shown in Section \ref{S: reduction}):

\begin{theorem}\label{T: everything}
\begin{enumerate}
\item  If $K$ has characteristic $p>0$, then $X$ is $K$-Witt if and only if $X$ is $\Z_p$-Witt; if $K$ has characteristic $0$, then $X$ is $K$-Witt if and only if $X$ is $\Q$-Witt.

\item \label{I: item} If $n>4$ and $P$ is a finite set of primes, then there is a compact orientable $n$-dimensional pseudomanifold that is $\Z_p$-Witt for any $p\in P$ but that is not  $\Q$-Witt and  not  $\Z_{p}$-Witt for $p\notin P$.

\item \label{I: Qwitt} If $n>4$ and $P$ is a finite set of  primes, then there are $\Q$-Witt spaces that are not $\Z_p$-Witt for any $p\in P$ and are $\Z_p$-Witt for $p\notin P$.

\item \label{I: 3-4.1} If $X$ is a $3$- or $4$-dimensional $\Z_p$-Witt space, then $X$ is a $\Q$-Witt space. 

\item \label{I: 3-4.2} If $X$ is a $3$- or $4$-dimensional  $\Q$-Witt space, then $X$ is a $\Z_p$-Witt space for any $p\neq 2$. If $X$ is also $\Q$-orientable, then it is also a $\Z_2$-Witt space. However, there are non-orientable $3$- and $4$-dimensional PL  $\Q$-Witt spaces that are not $\Z_2$-Witt spaces. 

\item \label{I: 0-1-2} All $0$-, $1$-, and $2$-dimensional pseudomanifolds are $K$-Witt for all $K$.
\end{enumerate}
\end{theorem}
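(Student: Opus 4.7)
The plan divides naturally by part and uses that the Witt condition, for any field $K$, is a pointwise condition on links: for each point $x$ lying in a stratum of odd codimension $2k+1$, the middle-perversity intersection homology $I^{\bar m}H_k(L_x;K)$ of the link $L_x$ must vanish.

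Part (1) is a base-change argument. Writing $IC^{\bar m}(L;K)$ for the intersection chain complex with $K$-coefficients and $K_0\subset K$ for the prime subfield (so $K_0=\Q$ or $K_0=\Z_p$ according to the characteristic of $K$), one has $IC^{\bar m}(L;K)\cong IC^{\bar m}(L;K_0)\otimes_{K_0}K$ as chain complexes. Because $K$ is flat over $K_0$ (any extension of fields is), taking homology commutes with this tensor product, yielding $I^{\bar m}H_*(L;K)\cong I^{\bar m}H_*(L;K_0)\otimes_{K_0}K$. Hence vanishing over $K$ is equivalent to vanishing over $K_0$, stratum by stratum, giving (1).

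For parts (2) and (3), the plan is to construct explicit pseudomanifolds by taking suspensions or open cones on spaces with prescribed torsion in ordinary homology. Lens spaces $L(m;q_1,\dots,q_k)$ and connected sums of them are the natural building blocks, since their integral homology can be arranged to contain precisely the desired $\Z_p$-torsion for $p\in P$ (or in the complement of $P$, as needed). Once such a space appears as the link of a stratum of $X$, the Witt condition at that stratum becomes a torsion condition that one can read off modulo each prime. The main work, and the expected principal obstacle, is verifying that no unintended torsion appears at other links or in the wrong degree after the iterated coning and suspension, so that the Witt profile of $X$ matches $P$ exactly, especially once the construction must handle several primes simultaneously.

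For parts (4)--(6) one analyzes low dimensions directly. Pseudomanifolds have no codimension-one strata, so in dimensions $\leq 2$ the only possible singular stratum is a collection of isolated points in the $n=2$ case, with $1$-dimensional links; since the Witt condition is nontrivial only for even-dimensional links, (6) is immediate. In dimensions $3$ and $4$, the same codimension bookkeeping shows that only $2$-dimensional links matter, and the condition becomes the vanishing of $I^{\bar m}H_1(L;K)$. Using the normalization $\wt L\to L$, one identifies $I^{\bar m}H_1(L;\Z)$ with $H_1$ of the closed $2$-manifold $\wt L$, whose torsion is entirely $\Z_2$-torsion contributed by nonorientable components. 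Part (4) follows because vanishing over any $\Z_p$ forces both the free and $p$-primary parts to vanish, hence the rational part is zero too. The odd-prime half of (5) follows because a group that is zero rationally and has only $2$-torsion vanishes modulo every odd prime. The $\Z_2$ subtlety in (5) comes from nonorientable components of $\wt L$: a $\Q$-orientation on $X$ rules these out, since otherwise the complement of the singular set in $X$ would fail to be $\Q$-orientable in a cone neighborhood of the offending point; dropping the orientability hypothesis allows links such as $\mathbb{RP}^2$, and explicit $3$- and $4$-dimensional $X$ with such links (e.g.\ suspensions and joins) provide the stated $\Q$-Witt-but-not-$\Z_2$-Witt counterexamples.
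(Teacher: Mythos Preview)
Your treatment of parts (1), (3), (4), (5), and (6) is essentially the paper's approach: the base-change argument for (1) is exactly the paper's Lemma, the lens-space construction $L\times S^1$ (and connected sums thereof) is precisely what the paper uses for (3), and the low-dimensional analysis via normalization and surface classification matches the paper's proofs of (4)--(6) closely.

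However, there is a genuine gap in your plan for part (2). You propose building links from lens spaces and their connected sums, but any such link $L$ is a \emph{manifold}, so $I^{\bar m}H_k(L;K)=H_k(L;K)$ and the ordinary universal coefficient theorem applies. In particular, if $H_k(L;\Q)\neq 0$ then $H_k(L;\Z)$ has positive rank, and hence $H_k(L;\Z_p)\supseteq H_k(L;\Z)\otimes\Z_p\neq 0$ for \emph{every} prime $p$. Thus no manifold link can be $\Z_p$-Witt for $p\in P$ while failing to be $\Q$-Witt; your ``torsion bookkeeping'' cannot produce the required behavior. The obstacle is not stray torsion at other strata, as you anticipate, but the impossibility of the desired vanishing pattern at the key link itself.

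The paper resolves this by taking the link to be a genuine pseudomanifold---specifically, the Thom space $\hat U$ of a disk bundle with Euler number $m=\prod_{p\in P}p$ over a sphere. Here $I^{\bar m}H_k(\hat U;K)$ is the image of cap product with the Euler class, which is multiplication by $m$ on $K$; this vanishes over $\Z_p$ for $p\mid m$ but not over $\Q$ or over $\Z_{p'}$ for $p'\nmid m$. This is exactly the ``worse'' violation of universal coefficients (the tensor term itself disappearing) that cannot occur for manifolds, and it is the missing idea in your plan for (2).
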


We also find examples of $4k$-dimensional pseudomanifolds demonstrating conditions \eqref{I: item} and \eqref{I: Qwitt} that not just satisfy or fail to satisfy the appropriate Witt conditions but that also definitively possess or fail to possess the associated dualities in nontrivial ways.

Finally, in the second half of the paper, Section \ref{S: bordism}, we follow Siegel \cite{Si83} by computing the bordism groups $\Omega_n^{K-\text{Witt}}$ of oriented $K$-Witt spaces for any coefficient field $K$ as well as identifying the resulting generalized homology theories. We show the following theorems (bear in mind item (1) of the preceding theorem, which implies that $\Omega_n^{K-\text{Witt}}\cong \Omega_n^{K'-\text{Witt}}$ if $K$ and $K'$ have the same characteristic):

\begin{theorem}[Theorem \ref{T: F-Witt}]\label{T: witt1}
Let $p\neq 0$ be a prime, and let $W(\Z_p)$ denote the Witt group of symmetric bilinear forms over $\Z_p$.  
\begin{enumerate}
\item $\Omega_0^{\Z_p-\text{Witt}}\cong \Z$. 

\item For $n\not \equiv 0\mod 4$, $\Omega_n^{\Z_p-\text{Witt}}=0$. 

\item For $n\equiv 0\mod 4$, $n>0$, the homomorphism $w:\Omega_n^{\Z_p-\text{Witt}}\to W(\Z_p)$ that assigns to $X^n$ the intersection form on $ I^{\bar m}H_{n/2}(X;\Z_p)$ is an isomorphism.
\end{enumerate}
\end{theorem}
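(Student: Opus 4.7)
The plan is to follow Siegel's blueprint \cite{Si83} for $\Omega_*^{\Q-\text{Witt}}$ and adapt it essentially verbatim to $\Z_p$ coefficients. The central technical device is a \emph{$\Z_p$-Witt surgery}: given a $\Z_p$-Witt space $W$ (possibly with boundary) and a class $\alpha\in I^{\bar m}H_i(W;\Z_p)$ with $2i<\dim W$, one attaches a cone on a chain representative of $\alpha$ to form a new pseudomanifold $W'$ in which (the image of) $\alpha$ dies. The only new stratum is the cone apex, whose link is the representative cycle; the local $\Z_p$-Witt condition there can be arranged by preliminary surgery on the cycle itself, while $\Z_p$-Witt-ness away from the apex is inherited from $W$.

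Part (1) is immediate, since a compact oriented $0$-dimensional pseudomanifold is a finite signed point set and bordisms are oriented $1$-manifolds. For part (2), given a closed $X^n$ with $n\not\equiv 0\pmod 4$, form $X\times I$ and iteratively apply $\Z_p$-Witt surgery relative to one boundary copy, killing below-middle-dimensional intersection homology classes one by one. When $n$ is odd this exhausts the intersection homology and yields a $\Z_p$-Witt nullbordism. When $n\equiv 2\pmod 4$ the residual middle-dimensional intersection pairing over $\Z_p$ is nondegenerate and skew-symmetric, so it admits a Lagrangian (valid over any field, including $\Z_2$, once nondegeneracy is assumed); coning off representatives of this Lagrangian completes the nullbordism.

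For part (3), $w$ is a homomorphism under disjoint union. It is well-defined because if $X^{4k}=\partial Y$ with $Y$ a $\Z_p$-Witt space, the long exact sequence of the pair, combined with $\Z_p$-Lefschetz duality, identifies the image of $I^{\bar m}H_{2k}(Y;\Z_p)\to I^{\bar m}H_{2k}(X;\Z_p)$ as a Lagrangian for the intersection form, so $w(X)=0$ in $W(\Z_p)$. Conversely, if $w(X)=0$, choose a Lagrangian $L\subset I^{\bar m}H_{2k}(X;\Z_p)$ and apply $\Z_p$-Witt surgery on $X\times I$ along representatives of $L$; the result is a $\Z_p$-Witt cobordism from $X$ to a space with trivial middle-dimensional form, which then bounds by the argument used in part (2). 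This gives injectivity.

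The principal obstacle is surjectivity of $w$. One must realize every element of $W(\Z_p)$ as the middle $\Z_p$-intersection form of some closed $4k$-dimensional $\Z_p$-Witt space. Since $W(\Z_p)$ is generated by rank-$1$ forms $\langle u\rangle$ for $u\in\Z_p^\times$ (and $W(\Z_2)\cong\Z/2$ is generated by $\langle 1\rangle$), it suffices to realize each such form. The form $\langle 1\rangle$ is given by $\C P^{2k}$, which is a smooth manifold and hence $\Z_p$-Witt. For $p$ odd and $u$ a non-square mod $p$, I would follow Siegel and form the suspension $\Sigma L=cL\cup_L cL$ on a closed oriented $(4k-1)$-manifold $L$ whose mod-$p$ torsion linking form represents $\langle u\rangle$---a suitably chosen lens space will suffice. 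Because the link of each apex is the manifold $L$, the $\Z_p$-Witt condition at the apices is automatic. The remaining step, which I expect to be the most delicate, is a direct chain-level identification of $I^{\bar m}H_{2k}(\Sigma L;\Z_p)$ with the mod-$p$ torsion of $H_{2k-1}(L;\Z)$ and of its intersection pairing with the reduced linking form, together with products with $\C P^{2\ell}$ or similar manifolds to cover every dimension $4k>0$.
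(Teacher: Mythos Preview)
Your overall strategy (adapting Siegel) is the right one, and your treatment of well-definedness and injectivity of $w$ is in line with the paper's. However, there is a genuine gap in your surjectivity argument, and a simplification you are missing for odd $n$.

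\textbf{The odd-dimensional case.} You propose surgering $X\times I$, but this is unnecessary. If $X^{2k+1}$ is $\Z_p$-Witt, the closed cone $\bar cX$ is already a $\Z_p$-Witt nullbordism: the only new stratum is the cone vertex, which has even codimension $2k+2$, so there is no Witt condition to check. This is exactly the paper's argument.

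\textbf{The surjectivity gap.} Your proposed realization of $\langle u\rangle$ for a non-square $u\in\Z_p^\times$ via the suspension $\Sigma L$ of a $(4k-1)$-manifold cannot work: the suspension formula (Section~\ref{S: background}) gives
\[
I^{\bar m}H_{2k}(\Sigma L;\Z_p)=0,
\]
since for $L$ of dimension $4k-1$ the critical index is $(4k-1)-\bar m(4k)=(4k-1)-(2k-1)=2k$. (This is morally the same computation that makes the closed cone a nullbordism in odd dimensions.) So there is no middle-dimensional intersection form on $\Sigma L$ to carry $\langle u\rangle$, and the linking-form identification you anticipate does not materialize with the lower-middle perversity.

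The paper handles surjectivity differently. First, it uses the explicit structure of $W(\Z_p)$: for $p\equiv 3\pmod 4$ the group is cyclic, generated by $\langle 1\rangle$, so $\C P^{2k}$ already suffices; for $p=2$ likewise. The only case requiring a second generator is $p\equiv 1\pmod 4$, where $W(\Z_p)\cong\Z_2\oplus\Z_2$ is generated by $\langle 1\rangle$ and $\langle s\rangle$ for $s$ a non-square. To realize $\langle s\rangle$, the paper uses a \emph{Thom space}: take a $2k$-disk bundle over $S^{2k}$ with Euler number $s$ (realized inside $\#_{i=1}^{s}\C P^{2k}$) and form its one-point compactification $\hat U$. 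The computation of $I^{\bar m}H_*(\hat U;\Z_p)$ via the cone formula and the Thom isomorphism shows $I^{\bar m}H_{2k}(\hat U;\Z_p)\cong\Z_p$, generated by the zero-section $[V]$ with $[V]\cdot[V]=s$. The singular point of $\hat U$ has codimension $4k$, so $\hat U$ is $\Z_p$-Witt automatically. This is where the paper's earlier Thom-space examples (Section~\ref{S: worse}) are reused.
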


\begin{theorem}[Theorem \ref{T: EM}]\label{T: witt2}
As a homology theory on CW complexes, $\Z_p$-Witt bordism splits as a direct sum of ordinary homology with coefficients. In particular, $$\Omega^{\Z_p-\text{Witt}}_n(X)\cong  \bigoplus_{r+s=n} H_r(X;\Omega^{\Z_p-\text{Witt}}_s).$$ 
\end{theorem}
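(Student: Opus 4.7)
The plan is to exhibit a natural transformation
\[
\Phi : \Omega^{\Z_p-\text{Witt}}_n(-) \longrightarrow \bigoplus_{r+s=n} H_r\bigl(-;\Omega^{\Z_p-\text{Witt}}_s\bigr)
\]
of generalized homology theories on CW complexes, verify that it reduces to the identity on a point, and invoke a standard comparison theorem. By Theorem \ref{T: witt1}, the only nonzero coefficient groups are $\Omega^{\Z_p-\text{Witt}}_0 \cong \Z$ and $\Omega^{\Z_p-\text{Witt}}_{4k} \cong W(\Z_p)$ for $k \geq 1$, so the right-hand side is a finite direct sum in each dimension.

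I would construct the components $\Phi_s$ geometrically. For $s = 0$, send $[f:M^n\to X]$ to $f_*[M] \in H_n(X;\Z)$, where $[M]$ is the fundamental class of the oriented pseudomanifold $M$; this is bordism-invariant because a $\Z_p$-Witt bordism is in particular an oriented pseudomanifold bordism. For $s = 4k > 0$, follow a Thom--Pontryagin style recipe adapted to the pseudomanifold setting: put $f$ into stratified transverse position with respect to a chosen CW decomposition of $X$, so that the preimage of each dual $(n-4k)$-cell is a compact $4k$-dimensional $\Z_p$-Witt subspace of $M$ (inheriting the local Witt condition via transversality); assign to each such cell the Witt invariant of its preimage in $W(\Z_p)$; verify that the resulting cellular chain in $C_{n-4k}(X;W(\Z_p))$ is a cycle, because preimages of cell boundaries are $\Z_p$-Witt cobordisms of the corresponding preimages and the Witt invariant is a bordism invariant; finally verify independence from the transversality and CW choices by a further Witt-bordism argument.

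Assembling $\Phi = \bigoplus_s \Phi_s$, both source and target are generalized homology theories on CW complexes, and $\Phi$ restricted to a point is the identity on $\Omega^{\Z_p-\text{Witt}}_*(\mathrm{pt})$ by construction together with Theorem \ref{T: witt1}. A standard uniqueness result for generalized homology, namely that a natural transformation inducing an isomorphism on coefficients is an isomorphism on all CW complexes, then completes the argument. The main difficulty I anticipate is the stratified transversality for Witt pseudomanifolds required to define $\Phi_{4k}$: one must simultaneously ensure that preimages of cells carry a well-defined pseudomanifold structure, continue to satisfy the local $\Z_p$-Witt condition, and exhibit cobordism-compatible boundary behavior, so that the characteristic-class construction is genuinely bordism-invariant. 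This technical core parallels the transversality work underlying Siegel's \cite{Si83} treatment of $\Q$-Witt bordism; the payoff is cleaner in the present setting because $W(\Z_p)$ is torsion, so no free-rank obstruction like the signature prevents the target from being ordinary homology.
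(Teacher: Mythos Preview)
Your approach is genuinely different from the paper's, and it is worth comparing them. The paper works entirely at the spectrum level: it represents $\Z_p$-Witt bordism by a spectrum $W_p$, proves that $W_p$ is an $MSO$-module spectrum (using that the product of a smooth oriented manifold with a $\Z_p$-Witt space is again $\Z_p$-Witt, together with some homotopy-theoretic bookkeeping from Rudyak to upgrade a quasi-module structure to an honest one), and then invokes a theorem of Taylor and Williams that any $2$-local $MSO$-module spectrum splits as a wedge of Eilenberg--MacLane spectra. This forces the Atiyah--Hirzebruch spectral sequence to degenerate at $E^2$ after $2$-localization; a short separate argument handles the odd torsion, which lives only on the $s=0$ line. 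No geometric transversality enters the proof at all.

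Your strategy---build an explicit natural transformation $\Phi$ and apply the comparison theorem for homology theories---is logically sound, and the final step is correct: a natural transformation of generalized homology theories that is an isomorphism on a point is an isomorphism on all CW complexes. The $\Phi_0$ component via the fundamental class is unproblematic. The difficulty is entirely in $\Phi_{4k}$, and it is more serious than you indicate. First, an arbitrary CW complex $X$ has no ``dual $(n-4k)$-cells''; you would at minimum have to restrict to simplicial or PL targets and then extend by CW approximation, which already costs you some naturality bookkeeping. Second, even in the PL category, the transversality you need is for a map from a stratified pseudomanifold $M$ into an external target $X$, arranged so that preimages of PL subspaces are themselves pseudomanifolds inheriting the $\Z_p$-Witt condition; this is not what Siegel's transversality arguments supply (those are internal general-position moves within a single Witt space), and establishing it in the needed generality is a substantial project in its own right. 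Third, naturality of $\Phi_{4k}$---not merely well-definedness for a fixed $X$---requires compatible transversality choices across maps $X\to X'$, which adds another layer. The paper's spectrum-level route sidesteps all of this at the price of quoting the Taylor--Williams structure theorem; your route, if carried through, would be more self-contained geometrically but would require a transversality package for Witt pseudomanifolds mapping to PL spaces that is not available off the shelf.
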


\paragraph{Acknowledgment.} I thank Shmuel Weinberger for suggesting that it would be interesting to study  intersection homology with coefficients, in general, and for asking about extending Siegel's results to $\Z_p$-Witt bordism, in particular. 
In the course of chasing down Theorem \ref{T: EM}, I had the pleasure of several fruitful correspondences and therefore owe great thanks to Jim McClure, Andrew Ranicki, Yuli Rudyak, Larry Taylor, and Shmuel Weinberger.

\section{Background material}\label{S: background}

In this section we provide the relevant background for the rest of the paper.

\paragraph{Pseudomanifolds.} We will work entirely in the class of piecewise linear (PL) spaces, although intersection homology can be defined more broadly on topological pseudomanifolds (see \cite{GM2}). 

We recall (see \cite{GM1}) that a PL stratified pseudomanifold $X$ is a PL space
equipped with a filtration (compatible with the PL structure)
\begin{equation*}
X=X^n\supset X^{n-2} \supset X^{n-3}\supset \cdots \supset X^0\supset X^{-1}=\emptyset
\end{equation*}
such that for each point $x\in X_i=X^i-X^{i-1}$, there exists 
a lower-dimensional compact PL stratified pseudomanifold $L$, a compatible filtration  of $L$
\begin{equation*}
L=L^{n-i-1}\supset  \cdots \supset L^0\supset L^{-1}=\emptyset,
\end{equation*}
and 
a \emph{distinguished neighborhood}
$U$ of $x$ such that there is 
a PL homeomorphism
\begin{equation*}
\phi: \R^i\times c(L)\to U
\end{equation*}
that takes $\R^i\times c(L^{j-1})$ onto $X^{i+j}\cap U$. Here $cL$ denotes the open cone on $L$. In other words, each point has a neighborhood that is a trivial bundle of cones on a lower-dimensional stratified space. 

The subspace $X_i=X^i-X^{i-1}$ is called the $i$th \emph{stratum}, and, in particular, it is a (possibly empty) PL $i$-manifold.  $L$ is called the \emph{link} of the component of the stratum. Note that we do not allow a codimension $1$ stratum. There are various technical reasons for this, including the avoidance of ``pseudomanifolds with boundary'' (see, e.g., \cite{GBF10,GBF11}, where this issue is treated in detail); however, we will revisit this idea in our discussion of bordism in Section \ref{S: bordism}. $X^{n-2}$ is often referred to as the \emph{singular locus} and denoted $\Sigma$. A PL stratified pseudomanifold $X$ is \emph{oriented} if $X-\Sigma=X-X^{n-2}$ is oriented as a manifold.

\paragraph{Intersection homology.} Intersection homology, due to Goresky and MacPherson \cite{GM1}, is a topological invariant of pseudomanifolds (in particular, it is invariant under choice of PL structure or stratification - see \cite{GM2}, \cite{Bo}, \cite{Ki}). It possesses a definition via sheaf theory, which is important (indeed crucial) for many applications, but the original definition was given as the homology of  a subcomplex of the complex $C_*(X)$ of PL chains on $X$. This $C_*(X)$ is a direct limit $\varinjlim_{T\in \mc T} C_*^T(X)$, where $C_*^T(X)$ is the simplicial chain complex with respect to the triangulation $T$, and  the direct limit is taken with respect to subdivision within a family of triangulations compatible with each other under subdivision and compatible with the filtration of $X$. In fact, while it is convenient to work with these PL chains, one can also work with simplicial chains, supposing a fine enough triangulation of $X$ (see the appendix to \cite{MV86}).

Intersection chain complexes are subcomplexes of $C_*(X)$ defined with regard to  \emph{perversity parameters} $\bar p:\Z^{\geq 2}\to  \Z$ that are required to satisfy $\bar p(2)=0$ and $\bar p(k)\leq \bar p(k+1)\leq \bar p(k)+1$. We think of the perversity as taking the codimensions of the strata of $X$ as input. The output tells us the extent to which chains in the intersection chain complex will be allowed to intersect that stratum. Thus a simplex $\sigma$ in $C_i(X)$ (represented by a simplex in some triangulation) is deemed $\bar p$-\emph{allowable} if  $\dim(\sigma\cap X^{n-k})\leq i-k+\bar p(k)$, and a chain $\xi\in C_i(X)$ is $\bar p$-allowable if every simplex with non-zero coefficient in $\xi$ or $\bd \xi$  is allowable as a simplex. The allowable chains constitute the chain complex $I^{\bar p}C_*(X)$, and the $\bar p$-perversity intersection homology groups $I^{\bar p}H_*(X)$ are the homology groups of this chain complex. Note that if $M$ is a manifold, then $I^{\bar p}H_*(M)\cong H_*(M)$. This is not obvious if $M$ is stratified in an interesting way, but it follows from the topological invariance of intersection homology groups, which implies that $I^{\bar p}H_*(M)$ may be computed from the trivial filtration $M\supset \emptyset$.

For more general background on intersection homology, we urge the reader to consult the expositions by Borel, et. al. \cite{Bo} or Banagl \cite{BaIH}. For both background and application of intersection homology in various fields of mathematics, the reader should see Kirwan and Woolf \cite{KirWoo}.

\paragraph{Intersection homology with coefficients.}
The definition of intersection homology with coefficients is given analogously so that $I^{\bar p}C_*(X;G)$ is the subcomplex of $C_*(X;G)\cong C_*(X)\otimes G$, again consisting of chains $\xi$ such that every simplex  with non-zero coefficient in $\xi$ or $\bd \xi$  is allowable as a simplex. However, a critical point to observe is that, in general, $I^{\bar p}C_*(X;G)$ is not isomorphic to $I^{\bar p}C_*(X)\otimes G$. It is true that a simplex with nonzero coefficient in a chain $\xi\in I^{\bar p}C_*(X;G)$ is allowable or not depending only on the simplex itself and not the coefficient. However,  which simplices appear with non-zero coefficient in $\bd \xi$ might depend strongly on the coefficients being used.

 For example, consider a chain of the form $\xi= \sum_i \sigma_i$ over some collection of oriented simplices, each with coefficient $1$. The allowability of each of the simplices $\sigma_i$ is independent of whether we think of $\xi$ as a chain in $C_i(X)$ or $C_i(X;\Z_2)$. Now suppose each $\sigma_i$ in $\xi$ is allowable, and consider $\bd \xi$. Suppose that $\bd \xi=2\eta$ for some chain $\eta$. It is possible in $C_i(X)$ that $\eta$ may contain simplices that are not allowable. However, in $C_i(X;\Z_2)$, $\bd \xi=0$, and the allowability conditions are satisfied vacuously. 
 
 Thus when working with coefficients, the obvious homomorphism $I^{\bar p}C_*(X)\otimes G\to I^{\bar p}C_*(X;G)$ is an injection, but it is not, in general, a surjection. 
These considerations, of course, have the potential to affect the intersection homology groups quite radically. For example, the universal coefficient theorem is not generally valid for intersection homology. In the next section, we turn to concrete examples that demonstrate geometrically what can go wrong.

\paragraph{The cone formula.} Perhaps the most important concrete computation in intersection homology is the formula for the intersection homology of an open cone. If $L$ is an $n$-dimensional compact pseudomanifold, then the open cone $cL$ is stratified so that $(cL)^0$ is the cone vertex and, for $i>0$, $(cL)^i=L^{i-1}\times (0,1)\subset cL$. Then the intersection homology of the cone $cL$ is given as follows:
 \begin{equation*}
I^{\bar p}H_i(cL;G) \cong
\begin{cases}
0, & i\geq n-\bar p(n+1),\\
I^{\bar p}H_{i}(L;G), & i<n-\bar p(n+1).
\end{cases}
\end{equation*}
This formula comes from direct consideration of the definition of the intersection chain complex and the fact that the dimension of the intersection of a simplex with the cone vertex can be at most $0$. See \cite[Section 1]{Bo} for more details.

It is also useful to have the formula for the intersection homology of a suspension, which comes from the cone formula and a Mayer-Vietoris argument (see\footnote{The formula presented here is slightly simpler than the one in \cite{Ki} since we  allow  only traditional perversities, not ``loose'' perversities.} \cite{Ki}). If $X$ is a compact $n$-dimensional pseudomanifold with suspension $SX$, then 
\begin{equation*}
I^{\bar p}H_i(SX;G) \cong
\begin{cases}
I^{\bar p}H_{i-1}(X;G), & i> n-\bar p(n+1),\\
0,& i=n-\bar p(n+1),\\
I^{\bar p}H_{i}(X;G), & i<n-\bar p(n+1).
\end{cases}
\end{equation*}

\paragraph{Witt spaces.} 
The chief interest (at least originally) in intersection homology is that, with field coefficients, it satisfies Poincar\'e Duality. More specifically, assume that $X^n$ is a compact, oriented, and \emph{irreducible} (meaning $X-\Sigma$ is connected) PL pseudomanifold, and let 
 $F$ be  a field  and $\bar p$ and $\bar q$ dual perversities (meaning that $\bar p(k)+\bar q(k)=k-2$ for all $k$).  Then there is a nondegenerate pairing $I^{\bar p}H_i(X;F)\otimes I^{\bar q}H_{n-i}(X;F)\to F$, defined via the intersection pairing on intersection chains in general position. We refer the reader to \cite{GM1, GM2, Bo, GBF18} for more details. While this is good, one would like something even better, a condition that guarantees a self-pairing between middle-dimensional intersection homology groups for even-dimensional manifolds. This is what the Witt spaces provide. 

Let $\bar m$ and $\bar n$ be the lower and upper middle perversities given by $(0,0,1,1,2,2, \dots)$ and $(0,1,1,2,2,3,\ldots)$, i.e. $\bar m(k)=\lfloor \frac{k-2}{2}\rfloor$ and $\bar n(k)=\lfloor \frac{k-1}{2}\rfloor$. These are dual perversities, and it is not hard to check from the definitions that if $X^{2n}$ is compact and oriented and has nonempty strata only of even dimension, then $I^{\bar m}H_*(X;F)\cong I^{\bar n}H_*(X;F)$. So in this case there is a $(-1)^n$-symmetric form  $I^{\bar m}H_n(X;F)\otimes I^{\bar m}H_{n}(X;F)\to F$. When $X$ has dimension $4n$ this yields signatures, etc; see \cite{GM1}. A weaker condition on $X^n$ that yields the same outcome is the \emph{$F$-Witt condition}, which assumes that $I^{\bar m}H_k(L;F)=0$ for each link $L^{2k}$ of each stratum of dimension $n-2k-1$, $k>0$. In this case, it follows once again that $I^{\bar m}H_*(X;F)\cong I^{\bar n}H_*(X;F)$ (see \cite{Bo}), and we obtain middle dimensional pairings \cite{GM2}. 

In keeping with the conventions of \cite{GM1} and \cite{Si83}, we will call an \emph{oriented compact} irreducible PL stratified pseudomanifold satisfying the $F$-Witt condition an \emph{$F$-Witt space}. The orientation condition is implicit in \cite{Si83} based on the definition of pseudomanifold given in \cite{GM1}. 
If we need to refer to a nonorientable pseudomanifold satisfying the Witt condition, we will call it explicitly a ``non-orientable Witt space.''

\section{Oddities of finite coefficients}

In this section, we begin with some simple examples of the violation of the universal coefficient theorem for intersection homology. We move on to more complex examples that are then used to construct spaces that satisfy Witt conditions with respect to certain fields but not others. 

\subsection{Violations of the universal coefficient theorem}\label{S: worse}

In \cite{GS83}, Goresky and Siegel used sheaf machinery to prove that $\bar p$-perversity intersection homology satisfies the universal coefficient theorem for an abelian group $G$ on a pseudomanifold $X$ if $X$  is \emph{locally $\bar p$-torsion free}. This condition means that if $L$ is the link of a stratum of $X$ of codimension $c$, then the abelian group $I^{\bar p}H_{c-2-\bar p(c)}(L)$ is torsion free. While the proof  of the theorem in \cite{GS83} involves the axiomatic sheaf formulation of intersection homology, one can work directly with chains to find examples of  the trouble that can arise if this torsion condition is violated. In this section, we provide several such examples of varying degrees of complexity.

\paragraph{A simple example of violation of universal coefficients.}

As a first example of the violation of the universal coefficient theorem in intersection homology, consider $X=c(\R P^2)$, the open cone on $\R P^2$, and suppose that $\bar p(3)=0$. The link of the singular vertex $v$ of $X$ is $L=\R P^2$, and $I^{\bar p}H_{1}(L)=H_1(L)=\Z_2$. So $X$ is not locally $\bar p$-torsion free. 

We compute from the cone formula (see Section \ref{S: background}):

\begin{equation*}
I^{\bar p}H_i(c\R P^2)\cong
\begin{cases}
0, & i\geq 2-\bar p(3)=2,\\
I^{\bar p}H_i(\R P^2), & i<2.
\end{cases}
\end{equation*}
In particular, $I^{\bar p}H_1(X)\cong \Z_2$ and $I^{\bar p}H_2(X)\cong 0$.

Similarly, since the  cone formula holds for any coefficients,
\begin{equation*}
I^{\bar p}H_i(c\R P^2;\Z_2)\cong
\begin{cases}
0, & i\geq 2-\bar p(3)=2,\\
I^{\bar p}H_i(\R P^2;\Z_2), & i<2,
\end{cases}
\end{equation*}
and so also $I^{\bar p}H_1(X;\Z_2)\cong \Z_2$ and $I^{\bar p}H_2(X;\Z_2)\cong 0$.

But this violates the universal coefficient theorem, which would predict that $I^{\bar p}H_2(X;\Z_2)$ would equal $\left(I^{\bar p}H_2(X)\otimes \Z_2 \right)\bigoplus \left(I^{\bar p}H_1(X)* \Z_2\right)\cong \Z_2$.

We can see in this example a situation in which a chain that would not be allowable in integer intersection homology becomes allowable in intersection homology with coefficients -- recall from Section \ref{S: background}
that it this effect that is ultimately responsible for the violation of the universal coefficient theorem. Specifically, consider the standard cell decomposition\footnote{We give a description using cells, but this argument could also be made simplicially.} of $\R P^2$ with one cell in each dimension, and let $x$ be  the 1-cell that represents the generator of $H_1(\R P^2)$. Similarly, let $y$ be the 2-cell with $\bd y=2x$. $y$ does not represent an integer homology class because it is not a cycle, but with coefficients in $\Z_2$, $\bd y=0$ and $[y]$ represents
the nontrivial class in $H_2(\R P^2)\cong \Z_2$. This is precisely the term coming from the torsion product in  the universal coefficient theorem in ordinary homology. Now, in $c\R P^2$, consider the $3$-chain $cy$ determined by the cone on $y$. This is \emph{not} an allowable chain with integer coefficients because even though $\dim (cy)\cap X^0=0\leq 3-3+\bar p(3)=0$, we have $\bd(cy)=y-c\bd y=y-c2x$, and this $2$-chain intersects the cone point, which is not allowed with this perversity. However, with $\Z_2$ coefficients, $\bd( cy)=y$, which does not intersection the cone point; thus $cy$ is allowable and kills the cycle $y$.  It is interesting to note that with $\Z$ coefficients $y$ is not even a cycle to begin with!

\paragraph{A ``worse'' violation of universal coefficients.}

In the last example, we saw that intersection homology can violate the universal coefficient theorem when the Goresky-Siegel local torsion condition is violated. More specifically, the expected  torsion product summands did not materialize. In the next example, we see that something more unexpected can happen: the tensor product terms might also vanish. In particular, we will construct spaces that have non-trivial integer intersection homology in their middle dimensions but whose intersection homology with finite coefficients vanishes in the same dimensions.

These examples were motivated initially by applying a  construction that Siegel uses with rational coefficients in \cite{Si83}. We also follow the arguments of Haefliger from  \cite[Section I.5.3]{Bo} for computing the intersection homology of a Thom space.

Consider $p$ copies of $\C P^{2}$ labeled $\C P^2_i$, $i=1,\ldots, p$. In each $\C P_i^2$, let $V_i$ denote an embedded $S^2$ representing the generator of $H_2(\C P^2_i)$. Let $X=\#_{i=1}^p \C P^2_1$. We may assume that the $V_i$ are disjoint within $X$, and we may form the connected sum $V=\#_{i=1}^pV_i\cong S^2$ embedded in $X$. The homology class $[V]\in H_2(\C P^2)$ is equal to $\sum [V_i]$. In particular, since the intersection number $[V_i]\cdot [V_i]=1$ in $\C P^2_i$, we have $[V]\cdot [V]=p$. 

Now let $U$ denote a tubular neighborhood of $V$ in $X$, and let $\hat U$ denote the one point compactification of $U$. This is none other than the Thom space of the normal bundle to $V$ in $X$. As shown in  \cite[Section I.5.3]{Bo}, the intersection homology of a Thom space is easy to compute. In general, if $M$ is a compact $n$-manifold with boundary and $Y=M\cup_{\bd M}c(\bd M)$, a short calculation with the Mayer-Vietoris sequence and the cone formula demonstrates that
\begin{equation*}
I^{\bar p}H_i(Y)\cong
\begin{cases}
H_i(Y), &i>n-\bar p(n)-1,\\
\text{Im: }H_i(M)\to H_i(Y), & i=n-\bar p(n) -1,\\
H_i(M),& i<n-\bar p(n)-1.\\
\end{cases}
\end{equation*}
Roughly speaking, in analogy with the cone formula, chains below a certain dimension are not allowed to intersect the distinguished cone point $v$, and in these dimensions the intersection homology is $H_*(Y-v)\cong H_*(M)$. In high dimensions, any chain is allowed, and the intersection homology is $H_*(Y)\cong H_*(M,\bd M)$. In the transition dimension, cycles cannot intersect $v$, but chains one dimension up can, and so we get the groups $\text{Im: }(H_i(M)\to H_i(Y))\cong \text{Im: }(H_i(M)\to H_i(M,\bd M))$.

If now $M$ is an $r$-disk bundle over a compact $m$-dimensional manifold $B$, then $Y$ is  the associated Thom space, and then we know $H_i(Y)\cong H_i(M, M-B)\cong H_{i-r}(B)$ by the Thom isomorphism theorem. In particular, $\text{Im}(H_i(M)\to H_i(Y))\cong \text{Im}(H_i(B)\to H_i(Y))\cong \text{Im}(H_i(B)\overset{e \cap \cdot}{\to}H_{i-r}(B))$, where $e$ is the euler class of the bundle (see, e.g. \cite[Section VI.12]{BRTG}).

\begin{comment}
\begin{equation*}
I^{\bar p}H_i(T)\cong
\begin{cases}
H_{i-r}(B), & i>m+r-\bar p(m+r)-1,\\
\text{Im: } H_i(V)\overset{\cap e}{\to} H_{i-r}(B), & i=m+r-\bar p(m+r)-1,\\
H_i(B), & i<m+r-\bar p(m+r)-1,
\end{cases}
\end{equation*}
where $e$ is the Euler class of the bundle.
\end{comment}

In our case at hand, and using the lower middle perversity $\bar m$ (see Section \ref{S: background}), we therefore have
\begin{equation*}
I^{\bar m}H_i(\hat U)\cong
\begin{cases}
H_{i-2}(S^2), & i>2,\\
\text{Im: } H_2(S^2)\overset{\cap e}{\to} H_{0}(S^2), & i=2,\\
H_i(S^2), & i<2.
\end{cases}
\end{equation*}
So the nonzero groups are $I^{\bar m}H_0(\hat U)=\Z$, $I^{\bar m}H_4(\hat U)=\Z$, and  $I^{\bar m}H_2(\hat U)=p\Z\cong \Z$, since the self-intersection number $[V]\cdot[V]=p$ is equal to the euler number. 

Letting $p$ be a prime, these same calculations hold over the  field  $\Z_p$ except in this case we see that $I^{\bar m}H_0(\hat U;\Z_p)=\Z_p$, $I^{\bar m}H_4(\hat U;\Z_p)=\Z_p$, and,  $I^{\bar m}H_2(\hat U; \Z_p)=p\Z_p=0$. In addition, for a prime $p'\neq p$, we get $I^{\bar m}H_0(\hat U;\Z_{p'})=\Z_{p'}$, $I^{\bar m}H_4(\hat U;\Z_{p'})=p\Z_{p'}\cong \Z_{p'}$, and  $I^{\bar m}H_2(\hat U; \Z_{p'})=\Z_{p'}$.

\begin{remark}\label{R: multip} More generally, if $m$ is a positive composite integer and we perform the above construction with $m$ copies of $\C P^2$, then we will have $I^{\bar m}H_2(\hat U; \Z_p)=0$ for each prime $p$ such that $p\mid m$ but $I^{\bar m}H_2(\hat U; \Z_{p'})=\Z_{p'}$ for each prime $p'$ such that $p'\nmid m$. 
\end{remark}

\begin{comment}It is not immediately clear from the fact that the sphere $V$ generating $H_2(S^2;\F_p)$ gets taken by the Thom isomorphism to $p$ in $H_0(S^2;\F_p)$, which of course represents $0$ in $H_0(S^2;\F_p)\cong H_2(\hat U;\F_p)$, that the class of $[V]$ is $0$ in $I^{\bar m}H_2(\hat U;\F_p)$. However, since $\hat U$ has only an even-codimension singularity, $I^{\bar m}H_*(\hat U;\F_p)$ is self-dual, and thus $[V]$ must be $0$ since $[V]\cdot [V]=p\equiv 0\mod p$. \end{comment}

Obviously there is nothing particularly special here about having found our bundle within a connected sum of $\C P^2$s. In fact, we can perform the same intersection homology computations starting with any $n$-bundle over an $n$-manifold and with an appropriate Euler number. However, our example also illustrates a more general procedure adapted from \cite{Si83} for finding spaces with trivial middle-dimensional $\Z_p$ intersection homology; see Remark \ref{R: more spaces}, below.

\subsection{$K$-Witt spaces that are not $K'$-Witt spaces}\label{S: spaces}

In this section, we construct spaces that are Witt with respect to certain fields but not Witt with respect to others, collectively demonstrating the assertions of Theorem \ref{T: everything} of the Introduction, with the exception of item (1), which is proven in Section \ref{S: bordism}. Recall that item $(1)$ states that whether or not a space is $K$-Witt depends only on the characteristic of $K$; hence in this section we consider only the fields $\Z_p$ and $\Q$.

\paragraph{Low dimensions.} We first dispense with some  low-dimensional considerations, establishing items \eqref{I: 3-4.1}, \eqref{I: 3-4.2}, and \eqref{I: 0-1-2} of Theorem \ref{T: everything}.
We observe immediately that all $0$- and $1$-dimensional pseudomanifolds are manifolds, and hence $K$-Witt for all fields $K$, while  $2$-dimensional pseudomanifolds that are not manifolds can have only codimension $2$ singularities  and so are also $K$-Witt for all $K$. For dimensions $3$ and $4$, we have the following propositions.

\begin{proposition}
Let $X$ be a $3$- or $4$-dimensional $\Z_p$-Witt space. Then $X$ is a $\Q$-Witt space. 
\end{proposition}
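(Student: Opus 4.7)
The plan is to reduce the $F$-Witt condition in dimensions $3$ and $4$ to the vanishing of a single middle-dimensional intersection homology group of a $2$-dimensional link, then invoke the Goresky--Siegel universal coefficient theorem to show this vanishing is essentially independent of the coefficient field.

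First, in dimension $n\in\{3,4\}$ the $F$-Witt condition concerns links $L^{2k}$ of strata of dimension $n-2k-1$ with $k>0$; the constraints $k>0$ and $n-2k-1\ge 0$ force $k=1$, so the condition reduces to $I^{\bar m}H_1(L;F)=0$ for each $2$-dimensional link $L$. Such an $L$ is a compact PL $2$-pseudomanifold, so (codimension-$1$ strata being forbidden) it has only isolated singular points, with the link of each singular point a disjoint union of circles.

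Next I would verify that $L$ is locally $\bar m$-torsion free, so that the Goresky--Siegel universal coefficient theorem (recalled at the start of Section~\ref{S: worse}) applies: the only relevant link invariant to check is $I^{\bar m}H_{2-2-\bar m(2)}(\sqcup S^1) = H_0(\sqcup S^1) \cong \Z^r$, which is free abelian. This delivers the natural short exact sequence
\[
0 \to I^{\bar m}H_1(L;\Z)\otimes F \to I^{\bar m}H_1(L;F) \to \mathrm{Tor}(I^{\bar m}H_0(L;\Z),F) \to 0
\]
for every abelian group $F$. I would also note that $I^{\bar m}H_0(L;\Z)\cong H_0(L\setminus\mathrm{Sing}(L);\Z)$ is free abelian, since $\bar m$-allowability forces any $0$- or $1$-simplex of an allowable chain or its boundary to miss the singular set (the only constraints being $\bar m(2)=0$); hence the $\mathrm{Tor}$ summand vanishes and $I^{\bar m}H_1(L;F)\cong I^{\bar m}H_1(L;\Z)\otimes F$ for every $F$.

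Finally, writing $A:=I^{\bar m}H_1(L;\Z)$, which is finitely generated because $L$ is compact PL, the hypothesis that $X$ is $\Z_p$-Witt gives $A/pA \cong A\otimes \Z_p = 0$; by the structure theorem for finitely generated abelian groups, $A$ must then be a finite torsion group with no $p$-torsion, and in particular $A\otimes\Q = 0$. Hence $I^{\bar m}H_1(L;\Q)=0$ for every relevant link $L$, so $X$ is $\Q$-Witt. The only nontrivial step is the appeal to Goresky--Siegel, but since $L$ falls squarely under their local-torsion-free hypothesis this should be routine; one could alternatively prove the needed universal coefficient sequence by a direct chain-level argument, using that the regular part of $L$ is orientable (because $X$ is) to show that the natural map $I^{\bar m}C_*(L;\Z)\otimes F \to I^{\bar m}C_*(L;F)$ is an isomorphism.
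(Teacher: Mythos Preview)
Your argument is correct, but it differs from the paper's proof. Both begin by observing that in dimensions $3$ and $4$ the only relevant links are compact $2$-dimensional pseudomanifolds $L$, and that the Witt condition reduces to $I^{\bar m}H_1(L;F)=0$. From there the paths diverge. The paper uses the structural fact that any such $L$ is a finite collection of closed surfaces $S_1,\dots,S_r$ glued along finitely many points, and then invokes invariance of intersection homology under normalization to identify $I^{\bar m}H_1(L;F)$ with the ordinary homology $H_1(\amalg_i S_i;F)$; the conclusion then follows from the ordinary universal coefficient theorem for surfaces. You instead verify directly that $L$ is locally $\bar m$-torsion free (because the links of its isolated singularities are unions of circles, whose $H_0$ is free) and apply the Goresky--Siegel universal coefficient theorem to $L$ itself, together with the observation that $I^{\bar m}H_0(L;\Z)$ is free so the $\mathrm{Tor}$ term drops out.

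Each approach has its advantages. The paper's route stays within the elementary, hands-on spirit of the surrounding section: it avoids the sheaf-theoretic Goresky--Siegel machinery and makes the geometry completely transparent, at the cost of importing two outside facts (the structure of $2$-pseudomanifolds and normalization invariance). Your route is slicker and more uniform---one check of the torsion-free hypothesis, one appeal to a general theorem, and a short algebraic step---and it would adapt more readily to other low-dimensional situations where the local torsion-free condition can be verified by inspection. Your closing remark that one could bypass Goresky--Siegel via a direct chain-level isomorphism $I^{\bar m}C_*(L;\Z)\otimes F\cong I^{\bar m}C_*(L;F)$ using orientability of $L-\mathrm{Sing}(L)$ is also plausible, though it would need a bit more argument than you indicate; in any case your main line via Goresky--Siegel already suffices.
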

\begin{proof}
The only nontrivial even-dimensional links $L$ in $X$ must be $2$-dimensional compact pseudomanifolds. But each of these is the union of a finite number of compact surfaces $S_1,\cdots S_r$, joined along a finite number of points (see \cite{GBF6}). Since intersection homology is invariant under normalizations\footnote{A pseudomanifold is normal if its links are connected, and every pseudomanifold is an image of a finite-to-one cover by a normal manifold, its normalization.} (see \cite[Section 4]{GM1}), $I^{\bar p}H_1(L;\Z_p)\cong H_1(\amalg_i S_i;\Z_p)$, and so the result follows from the universal coefficient theorem for ordinary homology. 
\end{proof}

We also have the following converse:

\begin{proposition}
If $X$ is a $3$- or $4$-dimensional   $\Q$-Witt space, then $X$ is a $\Z_p$ Witt space for any $p\neq 2$. If $X$ is also $\Q$-orientable, then it is also a $\Z_2$-Witt space. However, there are non-$\Q$-orientable $3$- and $4$-dimensional   $\Q$-Witt spaces that are not $\Z_2$-Witt spaces.  
\end{proposition}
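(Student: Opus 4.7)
The plan is to reduce everything to a classification of the even-dimensional links appearing in a 3- or 4-dimensional pseudomanifold. In both dimensions, the only even-dimensional links that need to be checked for the Witt condition are 2-dimensional: in dimension 4, the 0-dimensional strata have 3-dimensional links, which the Witt condition ignores, and codimension-1 strata are forbidden; in dimension 3, only the 0-dimensional strata contribute. As in the proof of the previous proposition, for any such 2-dimensional compact pseudomanifold link $L$, the normalization is a disjoint union $\amalg_i S_i$ of closed surfaces, and invariance of intersection homology under normalization gives $I^{\bar m}H_1(L;K) \cong \bigoplus_i H_1(S_i;K)$ for any coefficient field $K$.

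Assuming $X$ is $\Q$-Witt, each summand $H_1(S_i;\Q) = 0$, so by the classification of closed surfaces every $S_i$ must be $S^2$ or $\R P^2$. For any odd prime $p$, the ordinary universal coefficient theorem yields $H_1(S^2;\Z_p) = H_1(\R P^2;\Z_p) = 0$, since the only torsion in the integer homology of these surfaces is $\Z_2$; hence $I^{\bar m}H_1(L;\Z_p) = 0$ for every even-dimensional link and $X$ is $\Z_p$-Witt. To handle $p = 2$, one uses $\Q$-orientability to rule out the $\R P^2$ components: a distinguished neighborhood of a point in a stratum of codimension $2k+1$ has the form $\R^{n-2k-1}\times cL$, and orientability of $X - \Sigma$ forces the regular part of $L$ to be orientable, which in turn forces each $S_i$ in the normalization of $L$ to be an orientable closed surface. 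Combined with $H_1(S_i;\Q) = 0$, this forces $S_i \cong S^2$, so no $\R P^2$ appears in any link and $X$ is $\Z_2$-Witt.

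For the counterexamples, the unreduced suspension $S\R P^2$ is a 3-dimensional non-orientable PL pseudomanifold whose only singular points are the two suspension vertices, each with link $\R P^2$; by the analysis above it is $\Q$-Witt but not $\Z_2$-Witt. A 4-dimensional example is supplied by $S^1 \times S\R P^2$, whose 2-dimensional links are again copies of $\R P^2$ and whose regular part $S^1 \times \R P^2 \times (0,1)$ is non-orientable. The main step requiring care is the orientability argument: one must check that $\Q$-orientability of $X$ as a stratified pseudomanifold really does descend through the local cone structure to orientability of each surface component in the normalization of every 2-dimensional link, which amounts to tracking how an orientation on $X - \Sigma$ restricts to an orientation on the regular part of $L$ and then lifts to the normalization.
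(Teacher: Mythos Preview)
Your proof is correct and follows the same line as the paper: reduce to the 2-dimensional links, classify their normalized components as $S^2$s and $\R P^2$s via the $\Q$-Witt hypothesis, and use orientability to exclude $\R P^2$ for $p=2$. The only cosmetic differences are that the paper proves the orientability step in the contrapositive, explicitly producing a M\"obius band (and hence an orientation-reversing loop) inside a distinguished neighborhood whenever an $\R P^2$ occurs in a link, and that the paper's 4-dimensional counterexample is the double suspension $S^2\R P^2$ rather than your $S^1 \times S\R P^2$.
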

\begin{proof}

The  proposition is true for $p\neq 2$, since if $X$ is a $3$- or $4$-dimensional $\Q$-Witt space, then the two-dimensional links must consist of $S^2$s and $\R P^2$s glued along points, and these links will then have trivial  $\Z_p$ intersection homology in degree one, as well. 

For $p=2$, if a $\Q$-Witt space has a link that does involve at least one $\R P^2$, the space will not be $\Z_2$-Witt. However, we claim such a space will not be $\Q$-orientable either. To see this, note that any such pseudomanifold must have distinguished neighborhoods of the form $cL$ or  $\R^1\times cL$ and for which there is a map $\R P^2\to L$ that is injective off of finitely many points. Any embedded curve representing a generator of $\pi_1(\R P^2)$ can be homotoped by a small homotopy to an embedded curve $\gamma$ in $L$ whose neighborhood in the distinguished neighborhood is homeomorphic to the product of a M\"obius band with $\R^1$ or $\R^2$. Thus tracing around $\gamma$ in $X$ must reverse orientation. 

For  examples of non-orientable $3$- and $4$-dimensional PL  $\Q$-Witt spaces that are not $\Z_2$-Witt, we can take the suspension and double suspension of $\R P^2$.
\end{proof}

\begin{remark}We note that the non-$\Q$-orientable $\Q$-Witt non-$\Z_2$-Witt spaces also technically violate Siegel's definition of a $\Q$-Witt space in \cite{Si83}, since it implicitly uses the original Goresky-MacPherson definition of a pseudomanifold from \cite{GM1}, and this definition includes an orientability condition. In any event, there will be no rational perfect pairing on middle intersection homology, and this is what we like Witt spaces for. 
\end{remark}

\paragraph{Spaces that are $\mathbf{\Z_p}$-Witt but not $\mathbf{\Q}$-Witt or $\mathbf{\Z_{p'}}$-Witt for $\mathbf{p'\neq p}$.} 
We turn to demonstrating item \eqref{I: item} of Theorem \ref{T: everything} by next
constructing, for $n>4$,  an $n$-dimensional $\Z_p$-Witt space, $p$-prime,  that is neither $\Q$-Witt nor $\Z_{p'}$-Witt for any prime $p'\neq p$. From the preceding section, there exists a $4$-dimensional pseudomanifold $\hat U$ whose $I^{\bar m}H_2(\hat U;\Z_p)$ vanishes but such that neither $I^{\bar m}H_2(\hat U;\Q)$ nor $I^{\bar m}H_2(\hat U;\Z_{p'})$ vanishes for prime $p'\neq p$. It follows immediately that the suspension $S\hat U$ is $\Z_p$-Witt but not $\Q$-Witt and not  $\Z_{p'}$-Witt for any $p'\neq p$. By taking products with manifolds, $M\times S\hat U$, we obtain compact $\Z_p$-Witt spaces that are not Witt for fields of any other characteristic in all dimensions $\geq 5$. 

However, to make these examples a bit more robust, we would like to find some $4k$-dimensional spaces with these Witt properties and for which we can see directly that there is a nontrivial nonsingular middle intersection pairing over $\Z_p$ but not over $\Q$ or $\Z_{p'}$ for $p'\neq p$. For this, we use a slightly more elaborate starting point.

Consider a bundle of $2$-planes over the torus $T^2\cong S^1\times S^1$ and with euler number $e=p$. These can be found, for example, by providing $T^2$ with a complex structure and then forming the complex line bundle associated to a divisor $p[x]$ for $x\in T^2$; see, e.g. \cite{HUY}. Let $Y$ be the associated Thom space. Then, by our computations above, we have

\begin{align*}
I^{\bar m}H_i(Y;\Q)&\cong
\begin{cases}
H_{i-2}(T^2;\Q), & i>2,\\
\Q\cong \text{Im: } H_2(T^2;\Q)\overset{\cap e}{\to} H_{0}(T^2;\Q), & i=2,\\
H_i(T^2;\Q), & i<2,
\end{cases}\\
&\cong
\begin{cases}
\Q, & i=4,\\
\Q\oplus \Q, &  i=3,\\
\Q, &i=2,\\
\Q\oplus \Q, & i=1,\\
\Q, & i=0,
\end{cases}
\end{align*}
and with coefficients in $\Z_{p'}$, we obtain the same results with each $\Q$ replaced by $\Z_{p'}$. Meanwhile,

\begin{align*}
I^{\bar m}H_i(Y;\Z_p)&\cong
\begin{cases}
H_{i-2}(T^2;\Z_p), & i>2,\\
0\cong \text{Im: } H_2(T^2;\Z_p)\overset{\cap e}{\to} H_{0}(T^2;\Z_p), & i=2,\\
H_i(T^2;\Z_p), & i<2,
\end{cases}\\
&\cong
\begin{cases}
\Z_p, & i=4,\\
\Z_p\oplus \Z_p, &  i=3,\\
0, &i=2,\\
\Z_p\oplus Z_p, & i=1,\\
\Z_p, & i=0.
\end{cases}
\end{align*}

If $\alpha, \beta$ denote cycles  generating $H_1(S^1\times S^1;\Z_p)$, then $\alpha$ and $\beta$ also generate $I^{\bar m}H_1(Y;\Z_p)$, while $I^{\bar m}H_3(Y;\Z_p)$ is generated by
the restrictions of the Thom space over $\alpha$ and $\beta$, say $\hat \alpha, \hat \beta$ (each of which is homeomorphic as a space to the one point compactification of $S^1\times \R^2$ since the bundle is trivial over the complement of the divisor).

Now let $X=S^1\times S^2\times SY$, where $SY$ is the suspension of $Y$. The only singular stratum is $S^1\times S^2\times \{N,S\}$, where $\{N,S\}$ represents the north and south poles of the suspension. This is a codimension $5$ stratum of an $8$-dimensional pseudomanifold, and the link of the stratum is $Y$. Since $Y$ has vanishing middle dimensional middle perversity intersection homology over $\Z_p$, $X$ is a $\Z_p$-Witt space, but the middle intersection homology fails to vanish over $\Q$ so that $X$ is not a $\Q$-Witt space.

Using the formula for the intersection homology of a suspension (see Section \ref{S: background}),  together with the K\"unneth theorem, which holds holds for intersection homology when one term is a manifold (see \cite{Ki}), we see that 

$$I^{\bar m}H_4(X;\Z_p)\cong \Z_p\oplus \Z_p\oplus \Z_p\oplus \Z_p.$$

If $*$ denotes a basepoint in $S^1\times S^2$, the generators are  
$*\times S\hat \alpha$, $*\times S\hat \beta$, $S^1\times S^2\times \alpha$ and $S^1\times S^2\times \beta$. If the intersection number $\alpha\cdot \beta=1$, then the intersection matrix with respect to this basis is 

\begin{equation*}
\begin{pmatrix}
0 & 0 & 0 &  1\\
0 & 0 & -1 & 0\\
0 &  -1 &0&0\\
1 &0 &0&0 
\end{pmatrix},
\end{equation*}
assuming that $SX$ is thought of as $I\times X/\sim$ (as opposed to $Z\times I/\sim$).

On the other hand, $$I^{\bar m}H_4(X;\Q)\cong \Q\oplus \Q\oplus \Q\oplus \Q \oplus \Q,$$
where the first four summands are generated as before and the additional summand is generated by $z=*\times S^2$ times the generator of $I^{\bar m}H_2(X;\Q)$, which can be represented by $T^2$. Its intersection with each generator of $I^{\bar m}H_4(X;\Q)$, including itself, is $0$. For the intersections with $*\times S\hat \alpha$, $*\times S\hat \beta$, and itself, this can be seen by pushing it off the basepoint in the $S^1$ direction. For the intersections with $S^1\times S^2\times \alpha$ and $S^1\times S^2\times \beta$, we can push $z$ off in the direction of the suspension. Thus we obtain a degenerate intersection pairing. The dual of $z$ lives, of course, in $I^{\bar n}H_4(X;\Q)\cong \Q^5$, which is generated by our earlier four generators and $S^1\times *\times ST^2$. With the above convention for suspensions, we have an intersection number $(*\times S^2\times T^2)\cdot (S^1\times *\times ST^2)=p$. For $\Z_{p'}$, $p\neq p'$, the computations are the same, replacing all $\Q$s by $\Z_{p'}$s. 

Similar examples may be obtained easily in higher dimensions. For example, in dimensions $4k$, $k>2$, we can take the product of $X$ from the previous example with $k-2$ copies of $\C P^2$. The new space will be Witt (or non-Witt) for exactly the same fields as for $X$, the middle-dimensional pairing over $\Z_p$ remains nontrivial (and nonsingular), and, if $V_i$ represents the sphere in the $i$th copy of $\C P^2$ generating the homology in degree $2$, then $*\times S^2\times T^2\times \prod_{i=1}^{k-2}V_i$ represents a nontrivial $\bar m$-allowable class over $\Q$ and $\Z_{p'}$ whose dual $S^1\times *\times ST^2\times \prod_{i=1}^{k-2}V_i$ is $\bar n$-allowable but not $\bar m$-allowable. 

Furthermore, applying Remark \ref{R: multip} from above, if we carry through the above procedure for a bundle with euler number $m$, a composite instead of a prime, then we obtain spaces that are $\Z_p$-Witt for all primes $p$ such that $p\mid m$ but not $\Q$-Witt nor $\Z_p$-Witt when $p\nmid m$.

\paragraph{$\mathbf{\Q}$-Witt, but not $\mathbf{\Z_p}$-Witt for some $\mathbf{p}$.}

We now look for spaces that are $\Q$-Witt but that fail to be $\Z_p$-Witt for a single prime or a collection of primes. This corresponds to item \eqref{I: Qwitt} of Theorem \ref{T: everything}. In general, obtaining such spaces is easy; for example, take any even-dimensional closed manifold whose middle homology is all torsion and suspend as many times as desired. However, we would once again like to verify that these actually exhibit the correct existence or lack of middle-dimensional pairings (at least for spaces of dimension $4k$, $k>1$). It turns out that we can do even this without having to resort to constructions quite as specialized as those in the last section; in particular, we can start with manifolds and introduce a singularity with just a single suspension. 

To start off, fix a prime $p$, and let $L$ be  a $3$-dimensional lens space with $H_1(L)\cong \Z_{p}$ (see, e.g. \cite[Section 40]{MK}). Then we have $H_0(L;\Q)\cong H_3(L;\Q)\cong \Q$, $H_1(L;\Q)=H_2(L;\Q)=0$, and the same formulas replacing $\Q$ everywhere by $\Z_{p'}$ for $p'\neq p$. However, for coefficients in $\Z_p$, $H_i(L;\Z_p)\cong \Z_p$ for $0\leq i\leq 3$. This follows from the ordinary integer homology of the lens space and the universal coefficient theorem.

If we now let $J=L\times S^1$, then $J$ is a compact orientable $4$-manifold with $H_2(J;\Z_p)\cong \Z_p\oplus \Z_p$ and $H_2(J;\Q)=H_2(J;\Z_{p'})=0$ for $p'\neq p$. Thus the suspension $SJ$ is $\Q$-Witt and $\Z_{p'}$-Witt for all $p'\neq p$, but it is not $\Z_p$-Witt. By taking products with manifolds, we obtain spaces of all dimensions $\geq 5$ with these properties. 

Computing with the ordinary K\"unneth Theorem and the suspension formula for intersection homology (see Section \ref{S: background}, above), we see that

\begin{align*}
I^{\bar m}H_i(SJ;\Q)&\cong
\begin{cases}
\Q, & i=5,\\
\Q, & i=4,\\
0, &  i=3,\\
0, &i=2,\\
\Q, & i=1,\\
\Q, & i=0,
\end{cases}
\end{align*}
and similarly with all $\Q$s replaced by $\Z_{p'}$. On the other hand,
 
\begin{align*}
I^{\bar m}H_i(SJ;\Z_p)&\cong
\begin{cases}
\Z_p, & i=5,\\
\Z_p\oplus \Z_p, & i=4,\\
0, &  i=3,\\
\Z_p\oplus \Z_p, &i=2,\\
\Z_p\oplus \Z_p, & i=1,\\
\Z_p, & i=0.
\end{cases}
\end{align*}

Now, consider $X=SJ\times S^1\times S^2$, which is an $8$-dimensional pseudomanifold. We have $I^{\bar m}H_4(X;\Q)\cong \Q\oplus \Q$, generated by the suspension $S(L\times *_{S^1})\times *_{S^1\times S^2}$ and by $(*_{L}\times S^1)\times S^1\times S^2$. These cycles are readily checked to be dual to each other. The same is true replacing $\Q$ with $\Z_{p'}$.

On the other hand, let $d_i$ be the $i$-cell in the standard decomposition of the lens space with one cell in each dimension (see \cite{MK}). Then $I^{\bar m}H_4(X;\Z_p)\cong \Z_p^6$. The generators are: 
\begin{diagram}
S(L\times *_{S^1})\times *_{S^1\times S^2}&\qquad\qquad& S(d_2\times S^1)\times *_{S^1\times S^2}\\
 (d_2\times *_{S^1} )\times *_{S^1}\times S^2 &&(d_1\times S^1) \times *_{S^1}\times S^2\\
(d_1\times *_{S^1})\times S^1\times S^2&& (d_0\times S^1)\times S^1\times S^2.
\end{diagram} 
 
  It is the middle row of  generators that do not have appropriate duals in  $I^{\bar m}H_4(X;\Z_p)$. Their duals should be, respectively, $ S(d_1\times S^1)\times S^1\times *_{S^2}$ and  $S(d_2\times *_{S^1}) \times S^1\times *_{S^2}$, which, of course, are generators of $I^{\bar n}H_4(X;\Z_p)$. One readily checks geometrically that the intersection numbers of
 $ (d_2\times *_{S^1} )\times *_{S^1}\times S^2$ and $(d_1\times S^1) \times *_{S^1}\times S^2$
are  $0$ with all other generators of $I^{\bar m}H_4(X;\Z_p)$ - with the first two and the middle two by pushing off in the $*\times S^1\times *$ direction and with the last two by pushing up or down in the direction of the suspension. 

From here, we may once again obtain examples in all dimensions $4k$, $k>2$, by taking products with $\C P^2$s. Also, by taking connected sums with spaces constructed in the exact same way but for different primes in a set $P=\{p_i\}$, we obtain spaces that are $\Q$-Witt and $\Z_{p}$-Witt for any $p\notin P$ but that are not $\Z_{p}$-Witt for any $p\in P$.

\begin{comment}
\begin{lemma}
For every $n\geq 5$ and any finite collection of primes $P=\{p_1,\ldots, p_m\}$, there is a compact oriented $n$-dimensional PL pseudomanifold that is $\Q$-Witt and $\Z_p$-Witt for any $p\notin P$ but not $\Z_{p_i}$-Witt for any $p_i\in P$.
\end{lemma}
\begin{proof}
Let $L_i$ be a $3$-dimensional lens space with $H_1(L_i)\cong \Z_{p_i}$ (see, e.g. \cite[Section 40]{MK}). Let $J_i=L_i\times S^1$. Then $H_1(J_i)\cong H_2(J_i)=\Z_{p_i}$, and the connected sum $J=\#_{i=1}^mJ_i$ has $H_2(J)\cong I^{\bar m}H_2(J)=\oplus_{i=1}^m\Z_{p_i}$ and similarly for $H_1(J)\cong I^{\bar m}H_1(J)$. It follows from the universal coefficient theorem for ordinary homology that $I^{\bar m}H_2(J;\Q)=0$, $I^{\bar m}H_2(J;\Z_p)=0$ for $p\notin P$, and   $I^{\bar m}H_2(J;\Z_p)=\Z_p$ for $p\in P$. Finally, let $X=S^{n-4}J$, the $n-4$-th suspension of $J$. $X$ can be stratified as $X^{n-5}\subset X$ with $X^{n-5}\cong S^{n-5}$ with link $J$. Hence we obtain the desired Witt properties.  
\end{proof}
\end{comment}

\paragraph{Remaining questions.} 
We leave the following as open questions:

\begin{question}
Are there spaces that are $\Q$-Witt but that are not $\Z_{p}$-Witt for an \emph{infinite} set of primes? Are there spaces that are $\Q$-Witt but that are not $\Z_p$-Witt for all but a finite set of primes?
\end{question}

There can be no such compact example with all links compact \emph{manifolds}, since the middle dimensional homology groups would be finitely generated and thus not capable of carrying  infinite different types of torsion. 

\begin{question}
Are there spaces that are not $\Q$-Witt but that are $\Z_{p}$-Witt for an \emph{infinite} set of primes? Are there spaces that are not $\Q$-Witt but that are $\Z_p$-Witt for all but a finite set of primes?
\end{question}

\section{$K$-Witt bordism groups}\label{S: bordism}

In this section, we discuss the adaptation of Siegel's theorem on $\Q$-Witt bordism to other coefficient fields. In \cite{Si83}, Siegel notes that, as a consequence of the Poincar\'e duality on $\Q$-Witt spaces, for each $k>0$ there is a well-defined homomorphism from the Witt bordism group $\Omega_{4k}^{\Q-\text{Witt}}$ of compact $4k$-dimensional $\Q$-Witt spaces to the Witt group $W(\Q)$ of nondegenerate symmetric $\Q$-bilinear forms, given by taking a $\Q$-Witt space to its middle dimensional middle-perversity intersection form. One of the principal results of \cite{Si83} is that this homomorphism is, in fact, an isomorphism and that these bordism groups are $0$ in all other dimensions except for $k=0$, which has $\Omega_{0}^{\Q-\text{Witt}}\cong \Z$. It then follows from work of Sullivan that, as a homology theory, $\Q$-Witt bordism, $\Omega^{\Q-\text{Witt}}(\cdot)$, is equivalent to $KO[1/2](\cdot)$.  In this section, we extend Siegel's results by computing the $K$-Witt bordism groups for an arbitrary field $K$. We prove Theorems \ref{T: witt1} and \ref{T: witt2}, stated in the introduction. The reader can find more background on Witt groups in \cite{MH, LAM}.

In Subsection \ref{S: prelim}, we provide the basic definitions and some preliminary observations. In Subsection \ref{S: reduction}, we show that the Witt bordism groups (in fact the property of being a $K$-Witt space) depends only on the characteristic of $K$. 
In Subsection \ref{S: finite}, we prove the analogue of Siegel's theorem, $\Omega_{4k}^{\Z_p-\text{Witt}}\cong W(\Z_p)$ for $k>0$. In Subsection \ref{S: Witt maps}, we examine more closely the map $\Omega_{4k}^{\F_q-\text{Witt}}\to W(\F_q)$ for the finite fields $\F_q$. Finally, in Subsection \ref{S: homology}, we show that, as a homology theory, $\Z_p$-Witt bordism splits into a sum of (shifted) ordinary homology groups with coefficients.

\begin{comment}
In the first two sections below, we study the Witt bordism groups over finite fields. In Section \ref{S: 0}, we consider fields of characteristic $0$. 
\end{comment}

\subsection{Preliminaries}\label{S: prelim}

Let $K$ be a field. A PL space $X$ is a \emph{$K$-Witt space with boundary} if $\bd X$ and $X-\bd X$ are PL pseudomanifolds that satisfy the $K$-Witt condition and $\bd X$ has a collar in $X$. This collared boundary requirement is a more restrictive condition than allowing codimension $1$ strata in general (which are referred to as ``pseudoboundaries'' in \cite{GBF11}).
We let $\Omega_n^{K-\text{Witt}}$ denote the group of bordism classes of $n$-dimensional $K$-Witt spaces, in which $X$ is trivial if $X$ is the boundary of an $n+1$ dimensional $K$-Witt space with boundary. See \cite{Si83, GoBo} for more details in the $\Q$-Witt case.

The main invariant of the Witt bordism groups comes from the middle-dimensional intersection pairings. For a $2k$-dimensional compact oriented $K$-Witt space, there is a nondegenerate $(-1)^k$-symmetric intersection pairing $I^{\bar m}H_k(X;K)\otimes I^{\bar m}H_k(X;K)\to  K$; see \cite{GM1, GBF18} for more on the intersection pairing. 
For $n\equiv 0\mod 4$,  the resulting homomorphism $w=w_K:\Omega_n^{K-\text{Witt}}\to W(K)$ is well-defined. The proof of this in the $K$ case is exactly the same as that for $\Q$-Witt spaces  given in \cite[Theorem 2.1]{Si83}, which itself uses intersection homology Poincar\'e duality to show that the intersection form of a boundary must have a self-annihilating subspace of half the dimension of $I^{\bar m}H_{k}(X;K)$. The basic idea is exactly the same as the proof of signature invariance under manifold bordism. Any pairing with a self-annihilating subspace of half the dimension is trivial in the Witt group; see \cite{MH}.

\subsection{Reduction to prime fields}\label{S: reduction}

First, we reduce the problem of computing $\Omega_n^{K-\text{Witt}}$ to the special cases where $K=\Z_p$ or $\Q$ by showing that whether or not $X$ is a $K$-Witt space is determined entirely by the characteristic of $K$. For ease of treating all cases simultaneously, we define $\Z_0:=\Q$. We state the key results and then provide the proofs at the end of the subsection.

\begin{lemma}\label{L: K=p}
Let $X$ be a PL stratified pseudomanifold. Suppose $K$ is a field of characteristic $p$ (which may be $0$). Then, (letting $\Z_0=\Q$), there is a chain isomorphism $$I^{\bar p}C_*(X;\Z_p)\otimes_{\Z_p}K\to I^{\bar p}C_*(X;K).$$ 
\end{lemma}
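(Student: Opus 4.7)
The plan is to exploit that any field $K$ of characteristic $p$ is automatically a vector space, and in particular a free module, over its prime subfield (which is $\Z_p$ for $p>0$ and $\Q$ for $p=0$, under the convention $\Z_0=\Q$). First I would observe that at the level of ordinary PL chain complexes, the natural map $C_*(X;\Z_p)\otimes_{\Z_p}K\to C_*(X;K)$ sending $\xi\otimes k\mapsto k\xi$ is already a chain isomorphism: this is because $C_*(X;K)=C_*(X)\otimes_{\Z}K$, and since $K$ is annihilated by $p$ (respectively is a $\Q$-module when $p=0$), this tensor product factors as $(C_*(X)\otimes_{\Z}\Z_p)\otimes_{\Z_p}K=C_*(X;\Z_p)\otimes_{\Z_p}K$. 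The lemma therefore reduces to showing that this identification carries the subcomplex $I^{\bar p}C_*(X;\Z_p)\otimes_{\Z_p}K$ onto $I^{\bar p}C_*(X;K)$.

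To check this, fix a $\Z_p$-basis $\{k_\alpha\}$ of $K$. Any $\xi\in C_*(X;K)$ then admits a unique decomposition $\xi=\sum_\alpha \xi_\alpha\otimes k_\alpha$ with $\xi_\alpha\in C_*(X;\Z_p)$ almost all zero, and correspondingly $\bd\xi=\sum_\alpha(\bd\xi_\alpha)\otimes k_\alpha$. The crux of the argument is the observation that because $\{k_\alpha\}$ is $\Z_p$-linearly independent, a simplex $\sigma$ appears with nonzero $K$-coefficient in $\xi$ (respectively in $\bd\xi$) if and only if $\sigma$ appears with nonzero $\Z_p$-coefficient in at least one $\xi_\alpha$ (respectively in at least one $\bd\xi_\alpha$). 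In other words, the set of simplices supporting $\xi$ and $\bd\xi$ as $K$-chains is exactly the union of the corresponding supports of the $\xi_\alpha$ and $\bd\xi_\alpha$ as $\Z_p$-chains.

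Since $\bar p$-allowability is a purely set-theoretic condition on the collection of simplices appearing with nonzero coefficient in a chain and its boundary, this immediately yields the equivalence: $\xi$ is $\bar p$-allowable as a $K$-chain if and only if every $\xi_\alpha$ is $\bar p$-allowable as a $\Z_p$-chain, which is exactly the statement that $\xi\in I^{\bar p}C_*(X;\Z_p)\otimes_{\Z_p}K$. Injectivity of the claimed map is automatic from exactness of tensoring with the vector space $K$, and the equivalence just established supplies surjectivity; compatibility with differentials is inherited from the ambient isomorphism, so the result is a chain isomorphism.

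There is no deep obstacle to overcome here; the delicate point is simply to be explicit that $\Z_p$-linear independence of $\{k_\alpha\}$ prevents any collapse of supports when recombining $\Z_p$-coefficients into $K$-coefficients. The freeness of $K$ as a $\Z_p$-module is indispensable: the examples in Section \ref{S: worse} show that the analogous map $I^{\bar p}C_*(X)\otimes G\to I^{\bar p}C_*(X;G)$ can genuinely fail to be surjective when $G$ has torsion, precisely because such coefficient cancellation is then possible.
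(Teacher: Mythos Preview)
Your proof is correct and follows essentially the same approach as the paper: both arguments identify $C_*(X;\Z_p)\otimes_{\Z_p}K$ with $C_*(X;K)$, then use $\Z_p$-linear independence of a chosen basis to show that the support of $\xi$ and $\bd\xi$ over $K$ is the union of the supports of the component chains over $\Z_p$, so that allowability transfers in both directions. The only cosmetic difference is that you fix a global $\Z_p$-basis of $K$ at the outset, whereas the paper works with a basis for the finite-dimensional subspace spanned by the coefficients of a given chain; the substance of the argument is identical.
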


\begin{corollary}\label{C: K=p}
Let $X$ be a PL stratified pseudomanifold and $K$ a field of characteristic $p$, possibly with $p=0$. Then $X$ is $K$-Witt if and only if $X$ is $\Z_p$-Witt (taking $\Z_0=\Q$). 
\end{corollary}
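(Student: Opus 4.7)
The plan is to deduce the corollary directly from Lemma \ref{L: K=p} by passing to homology on each link. Recall that the $K$-Witt condition requires $I^{\bar m}H_k(L;K)=0$ for every link $L^{2k}$ of every stratum of codimension $2k+1$ with $k>0$. The compactness, irreducibility, and orientation conditions built into the definition of a Witt space are all independent of the coefficient field, so the only thing to verify is that this vanishing condition on links is insensitive to switching between $K$ and $\Z_p$ (with $\Z_0=\Q$).

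First, I would apply Lemma \ref{L: K=p} to each such link $L$, obtaining a chain isomorphism
\[
I^{\bar m}C_*(L;\Z_p)\otimes_{\Z_p} K \;\stackrel{\cong}{\longrightarrow}\; I^{\bar m}C_*(L;K).
\]
Since $\Z_p$ is a field, every $\Z_p$-module is flat, so the functor $(-)\otimes_{\Z_p}K$ is exact and commutes with the formation of homology. This yields a natural isomorphism
\[
I^{\bar m}H_k(L;\Z_p)\otimes_{\Z_p}K \;\cong\; I^{\bar m}H_k(L;K).
\]

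Now the key observation: since $K$ has characteristic $p$ it contains $\Z_p$ as the prime subfield, so $K$ is a nonzero $\Z_p$-vector space. For any $\Z_p$-vector space $V$, we then have $V\otimes_{\Z_p}K=0$ if and only if $V=0$. Applied with $V=I^{\bar m}H_k(L;\Z_p)$, this gives $I^{\bar m}H_k(L;K)=0$ if and only if $I^{\bar m}H_k(L;\Z_p)=0$. Running this equivalence over all odd-codimension links yields the corollary.

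This deduction is essentially a formality once Lemma \ref{L: K=p} is in hand, so I do not anticipate any genuine obstacle at the level of the corollary itself; the substantive work is entirely hidden inside the chain-level statement of the lemma. The subtlety worth flagging is that the pathologies highlighted in Section \ref{S: worse} --- where $I^{\bar p}C_*(X)\otimes G$ genuinely fails to equal $I^{\bar p}C_*(X;G)$ --- do not afflict us here, because the base ring $\Z_p$ is already a field and hence no torsion can obstruct the passage from $\Z_p$-coefficients up to $K$-coefficients.
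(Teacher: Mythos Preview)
Your proof is correct and follows essentially the same approach as the paper's own argument: apply Lemma~\ref{L: K=p} to each link, pass to homology using exactness of $(-)\otimes_{\Z_p}K$ (the paper phrases this as the algebraic universal coefficient theorem for fields), and observe that tensoring a $\Z_p$-vector space with the nonzero $\Z_p$-vector space $K$ detects vanishing. Your write-up is slightly more explicit about the flatness and the ``$V\otimes K=0\iff V=0$'' step, but the logical route is the same.
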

\begin{corollary}\label{C: 2}
If $K$ is of characteristic $p$, $\Omega_n^{K-\text{Witt}}=\Omega_n^{\Z_p-\text{Witt}}$. In particular, if $K$ has characteristic $0$,  $\Omega_n^{K-\text{Witt}}=\Omega_n^{\Q-\text{Witt}}$.
\end{corollary}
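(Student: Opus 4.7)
The plan is to deduce Corollary \ref{C: 2} as a bookkeeping consequence of Corollary \ref{C: K=p}, rather than as an independent result. The definition of $\Omega_n^{K-\text{Witt}}$ depends on $K$ only through (a) which $n$-dimensional PL pseudomanifolds qualify as $K$-Witt spaces and (b) which $(n+1)$-dimensional PL pseudomanifolds-with-collared-boundary qualify as $K$-Witt spaces with boundary. The group operation is disjoint union, which makes no reference to $K$ at all. So if the data in (a) and (b) are invariant under changing $K$ within a fixed characteristic, the groups agree literally, not just up to isomorphism.

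First I would apply Corollary \ref{C: K=p} directly to conclude that the class of $n$-dimensional $K$-Witt spaces coincides with the class of $n$-dimensional $\Z_p$-Witt spaces, where $p=\operatorname{char}(K)$ and $\Z_0:=\Q$. Next I would unpack the definition of a $K$-Witt space with boundary $W$ from Subsection \ref{S: prelim}: both $W-\bd W$ and $\bd W$ are required to be PL pseudomanifolds satisfying the $K$-Witt condition (with $\bd W$ collared). Since each of $W-\bd W$ and $\bd W$ is a PL stratified pseudomanifold in its own right, applying Corollary \ref{C: K=p} to each separately shows the $K$-Witt condition on $W$ is equivalent to the $\Z_p$-Witt condition on $W$. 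Hence the sets of admissible $(n+1)$-dimensional null-bordisms coincide as well.

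With both the objects and the null-bordisms identified, the equivalence relations defining $\Omega_n^{K-\text{Witt}}$ and $\Omega_n^{\Z_p-\text{Witt}}$ agree, and the disjoint-union group structures agree, so the groups are equal on the nose. The second sentence of the corollary is simply the specialization $p=0$, recalling $\Z_0=\Q$. The only conceptual content of the argument is already absorbed into the chain-level isomorphism $I^{\bar p}C_*(X;\Z_p)\otimes_{\Z_p}K \cong I^{\bar p}C_*(X;K)$ of Lemma \ref{L: K=p}; I do not expect any further technical obstacle beyond carefully noting that the collared-boundary and pseudomanifold conditions themselves make no reference to the coefficient field.
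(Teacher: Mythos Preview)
Your proposal is correct and matches the paper's approach: the paper simply states that Corollary \ref{C: 2} follows immediately from Corollary \ref{C: K=p}, and your argument is precisely the unpacking of that immediate deduction. Your observation that one must apply Corollary \ref{C: K=p} separately to $\bd W$ and $W-\bd W$ in order to handle null-bordisms is the right way to make ``immediately'' explicit.
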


Corollary \ref{C: 2} follows immediately from Corollary \ref{C: K=p}.

To see how this reduction relates to the Witt group invariants, note that for any inclusion of fields $\Z_p \into K$ (or $\Q\into K$ if $p=0$), we obtain a commutative diagram

\begin{equation}\label{E: commute}
\begin{CD}
\Omega_{4k}^{\Z_p-\text{Witt}}&@>w_{\Z_p}>>&W(Z_p)\\
@V= VV&&@VVV\\
\Omega_{4k}^{K-\text{Witt}}&@>w_{K}>>&W(K).
\end{CD}
\end{equation}
Here the righthand vertical map is induced by the field homomorphism $\Z_p \into K$.
To see that this diagram commutes, first note that by Lemma \ref{L: K=p},  $I^{\bar m}H_{2k}(X;K)$ is generated over $K$ by elements of the form $[\xi]\otimes 1$, where  $\xi\in I^{\bar m}H_{2k}(X;\Z_p)$. 
So,  if $X^{4k}$ is $K$-Witt (and hence $\Z_p$-Witt) and we  choose a basis for $I^{\bar m}H_{2k}(X;\Z_p)$, then the resulting intersection pairing matrix for the dual $K$ pairing on $I^{\bar m}H_{2k}(X;K)$ is identical to the intersection pairing matrix for the $\Z_p$ pairing on $I^{\bar m}H_{2k}(X;\Z_p)$. Thus the same matrix with entries in $\Z_p$ represents both $w_{\Z_p}(X)\in W(\Z_p)$ and $w_{K}(X)\in W(K)$. This is consistent with the map $W(\Z_p)\to W(K)$ induced by inclusion.

It follows that $w_K$ is determined entirely by $w_{\Z_p}$, which will be studied in the next subsection for $p\neq 0$.

 We return now to the deferred proofs.

\begin{proof}[Proof of Lemma \ref{L: K=p}]
Define $\Phi:I^{\bar p}C_*(X;\Z_p)\otimes_{\Z_p}K\to I^{\bar p}C_*(X;K)$ by $\Phi(\sum_i \xi_i\otimes k_i)=\sum_i k_i\xi_i$, where each $\xi_i\in I^{\bar p}C_*(X;\Z_p)$ and $k_i\in K$. Note that $k_i\xi_i$ makes sense as each $\xi_i$ equals $\sum m_j\sigma_j$ for $m_j\in\Z_p$ and $\sigma_j$ a simplex of some triangulation of $X$, and $k_im_j$ makes sense as an element of $K$. It is easy to check that $\Phi$ is a chain map and well-defined.

Perhaps the simplest way to check that $\Phi$ is injective is to consider the following commutative diagram:
\begin{diagram}
I^{\bar p}C_*(X;\Z_p)\otimes_{\Z_p}K&\rTo^\Phi &I^{\bar p}C_*(X;K)\\
\dInto && \dInto\\
C_*(X;\Z_p)\otimes_{\Z_p}K&\rEquals & C_*(X;K).
\end{diagram}
Note that it is clear from the definitions that $I^{\bar p}C_*(X;G)\to C_*(X;G)$ is always an inclusion. Since the tensor product over a field is left exact, the vertical arrows are inclusions, and the bottom map is a standard isomorphism. It follows from the diagram that $\Phi$ is injective.

For surjectivity, let $\xi=\sum k_i\sigma_i \in I^{\bar p}C_*(X;K)$. Note that the sum is finite, since each element of $C_*(X)$ lives in a fixed triangulation. Consider the $\Z_p$ vector subspace $V$ of $K$ spanned by the $k_i$. Let $\{x_j\}_{j=1}^N$ be a basis for $V$. Then each $k_i=\sum n_jx_j$, $n_j\in \Z_p$. Using this, we can rewrite $\xi$ in the form $\sum x_j\xi_j$, where $\xi_j\in C_*(X;\Z_p)$. We claim that each $\xi_j\in I^{\bar p}C_*(X;\Z_p)$, from which it will follow that $\xi=\Phi(\sum \xi_j\otimes x_j)$. It is clear that each simplex $\sigma$ appearing in each $\xi_j$ must be allowable since each occurs with non-zero coefficient in $\xi$. The point is to show that each $\bd \xi_j$ is allowable, which is not immediately clear. However, suppose that $\tau$ is a simplex that appears with nonzero ($\Z_p$-)coefficient in $\bd \xi_j$ for some $j$. The total coefficient of $\tau$ in $\bd \xi$ must have the form $\sum_j x_jm_j$, where $m_j$ is the coefficient of $\tau$ in $\bd \xi_j$. Since $\tau$ appears nontrivially in $\bd \xi_j$ for some $j$, some $m_j\not\equiv 0\mod p$, and so $\sum_j x_jm_j\neq 0$, as the $\xi_j$ are linearly independent as a vector space basis. Thus $\tau$ appears nontrivially in $\bd \xi$, and hence must be allowable, because $\xi$ is an allowable chain.
\end{proof}

\begin{remark}
This lemma can be shown more generally over  topological pseudomanifolds using the sheaf approach to intersection homology. We provide a PL chain level proof, more in keeping with the spirit of the current paper.
\end{remark}

\begin{proof}[Proof of Corollary \ref{C: K=p}]
From the lemma and the algebraic universal coefficient theorem for fields, for a compact PL pseudomanifold, $I^{\bar p}H_*(X;K) \cong I^{\bar p}H_*(X;\Z_p)\otimes_{\Z_p}K$. So, for a compact link $L$, $I^{\bar p}H_i(L;K)$ vanishes if and only if $I^{\bar p}H_i(L;\Z_p)$ vanishes. The corollary follows.   
\end{proof}

\subsection{Witt bordism over $\Z_p$}\label{S: finite}

In this section we compute $\Omega^{\Z_p-\text{Witt}}_n$. For a further discussion of the case of more general finite fields, see Section \ref{S: Witt maps} below.

\begin{theorem}\label{T: F-Witt}
Let $p\neq 0$ be a prime. 
\begin{enumerate}
\item $\Omega_0^{\Z_p-\text{Witt}}\cong \Z$. 

\item For $n\not \equiv 0\mod 4$, $\Omega_n^{\Z_p-\text{Witt}}=0$. 

\item For $n\equiv 0\mod 4$, $n>0$, the homomorphism $w:\Omega_n^{\Z_p-\text{Witt}}\to W(\Z_p)$ that assigns to $X^n$ the intersection form on $ I^{\bar m}H_{n/2}(X;\Z_p)$ is an isomorphism.
\end{enumerate}
\end{theorem}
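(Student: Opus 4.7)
The plan is to adapt Siegel's proof \cite{Si83} of the $\Q$-Witt bordism theorem to $\Z_p$-coefficients, using Corollary \ref{C: K=p} wherever needed to convert statements about $\Z_p$-Witt spaces into chain-level statements over $\Z_p$. Part (1) is essentially immediate: $0$-dimensional $\Z_p$-Witt spaces are finite collections of signed points, $1$-dimensional $\Z_p$-Witt bordisms are compact oriented $1$-manifolds with collared boundary, and the signed count of points induces the isomorphism $\Omega_0^{\Z_p-\text{Witt}} \cong \Z$. For parts (2) and (3), the heart of the argument is a reduction lemma: every compact oriented $\Z_p$-Witt space $X^n$ is bordant, through $\Z_p$-Witt spaces, to one whose intersection homology $I^{\bar m}H_i(X';\Z_p)$ vanishes for $0 < i < n/2$.

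To prove the reduction lemma, I would induct upward on $i$. Given a nonzero class $[\xi] \in I^{\bar m}H_i(X;\Z_p)$ with $i < n/2$, use PL general position to arrange $\xi$ so that it meets the strata of $X$ in codimensions consistent with $\bar m$-allowability, and take a regular neighborhood $N(\xi)$. Form the $\Z_p$-Witt bordism $W = (X \times [0,1]) \cup cN(\xi)$, where the cone is attached along $N(\xi) \times \{1\}$; the top boundary $X'$ is obtained from $X$ by replacing $N(\xi)$ with $cN(\xi)$. Since $i < n/2$, the newly introduced stratum at the cone vertex has a link of sufficiently low dimension that the cone formula and a Mayer--Vietoris calculation show the Witt condition is preserved and the class $[\xi]$ is killed without producing new sub-middle classes. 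Iterating across all nonzero classes in degrees $i < n/2$, and then invoking Poincar\'e duality (valid over $\Z_p$ by the Witt hypothesis), the intersection homology of the resulting space is concentrated in degrees $0$, $n/2$, and $n$.

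After reduction, part (2) follows by cases. If $n$ is odd, the reduced $X'$ has $I^{\bar m}H_*(X';\Z_p)$ concentrated in degrees $0$ and $n$, so $X'$ is a disjoint union of $\Z_p$-intersection-homology spheres, each bounding its own cone and giving a null-bordism. If $n = 4k+2$, the middle intersection form on $I^{\bar m}H_{2k+1}(X';\Z_p)$ is skew-symmetric and nondegenerate, hence hyperbolic over the field $\Z_p$; a further round of coning applied to representatives of a Lagrangian basis kills the middle homology and reduces to the sphere case. For part (3) with $n = 4k > 0$, well-definedness of $w$ is already recorded in Section \ref{S: prelim}. Injectivity follows by the same surgery technique: if $w([X]) = 0$, the symmetric middle form is metabolic (equivalently, hyperbolic over a field), and coning off a Lagrangian basis of $I^{\bar m}H_{2k}(X;\Z_p)$ yields a null-bordism. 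For surjectivity, every element of $W(\Z_p)$ is represented by a diagonal form $\langle a_1, \ldots, a_r \rangle$, and I would realize each rank-one summand $\langle a \rangle$ by adapting the Thom space constructions of Section \ref{S: spaces}---e.g., twisted $S^2$-bundles over appropriate bases with prescribed Euler number, suspended and crossed with manifold factors to land in dimension $4k$---and take connected sums to assemble arbitrary diagonal classes.

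The main obstacle is the surgery-by-coning step pervading the above: representing an intersection homology class by a sufficiently geometric cycle, performing the cone attachment in a PL-clean way compatible with the stratification, and verifying that the resulting bordism remains a $\Z_p$-Witt space. The Witt condition is sensitive to the coefficient field---as the examples of Section \ref{S: spaces} make vivid---so one must track the $\Z_p$-intersection homology of each newly introduced link with care. Lemma \ref{L: K=p} ameliorates this by reducing all $\Z_p$-coefficient computations to the universal $\Z_p$-chain complex, so that one may work directly with $\Z_p$ throughout; nevertheless, the geometric construction of the bordisms themselves remains the genuine technical heart of the theorem and is where the bulk of the proof should be concentrated.
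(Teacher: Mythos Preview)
Your overall strategy---adapt Siegel---is right, but the proposal both overcomplicates some steps and omits the one technical modification that the paper singles out as essential.

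First, the odd-dimensional case is much easier than you make it. If $X^n$ is $\Z_p$-Witt with $n$ odd, the closed cone $\bar c X$ is already a $\Z_p$-Witt null-bordism: the only new stratum is the cone vertex, which has \emph{even} codimension $n+1$, and the Witt condition imposes no constraint on even-codimension strata. No preliminary reduction is needed. Your ``reduction lemma'' killing sub-middle intersection homology is likewise unnecessary for the even-dimensional case: Siegel's surgery operates entirely on middle-dimensional isotropic classes, and once $I^{\bar m}H_{n/2}$ is killed the cone $\bar c X$ is Witt because the cone vertex (now of odd codimension $n+1$) has link $X$ with vanishing middle $I^{\bar m}H$. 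Sub-middle groups never enter. Also, your description of the bordism $W$ has a dimensional slip: the link of the new cone vertex is not ``of sufficiently low dimension'' but has dimension $n$; what makes the Witt condition hold is a computation of its middle intersection homology, not its size.

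Second, and more seriously, you have not identified the actual technical heart of the $\Z_p$ adaptation. The paper's proof follows Siegel's injectivity argument almost verbatim \emph{except} at one point: Siegel needs the isotropic class $[z]$ to be represented by an \emph{irreducible} cycle, meaning $H_k(|z|)\cong \Z$ generated by the fundamental class of $|z|$. Over $\Z_p$ one needs the analogue $H_k(|z|;\Z_p)\cong \Z_p$, and producing such a representative is where the coefficient field genuinely matters. The construction is: write a representative as $\sum n_i\sigma_i$, replace each $n_i$ by its least residue $0\le m_i<p$, use relative stratified general position to separate the $m_i$ parallel copies of each $\sigma_i$ (keeping boundaries fixed), and then pipe the resulting simplices together. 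This ``$\Z_p$-irreducible'' $z$ is what makes Siegel's key computation $I^{\bar m}H_k(\hat U;\Z_p)=0$ go through for the regular neighborhood $U$ of $|z|$, and hence what makes each surgery step produce a genuine $\Z_p$-Witt bordism. Your proposal gestures at ``representing by a sufficiently geometric cycle'' as an obstacle but does not name irreducibility or indicate how to achieve it; over a field like $\F_{p^m}$ with $m>1$ this step actually \emph{fails} (one cannot make sense of ``$x$ copies'' of a simplex for $x\notin\Z_p$), which is why the paper reduces everything to the prime field first. Finally, your ``cone off a Lagrangian basis'' should be read as an iteration: one isotropic class at a time, with a fresh irreducible representative constructed in each successive surgered space.
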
  

We can observe immediately that all $0$- and $1$-dimensional $\Z_p$-Witt spaces are, of course, manifolds, so that $\Omega_0^{\Z_p-\text{Witt}}\cong \Z$, as for manifolds and $\Q$-Witt spaces. Furthermore, as for $\Q$-Witt spaces, $\Omega_{2k+1}^{\Z_p-\text{Witt}}=0$, since if $X$ is an odd-dimensional $\Z_p$-Witt space, then the closed cone $\bar cX$ is also $\Z_p$-Witt, as the new stratum consisting of the cone vertex has even codimension. 

This leaves the Witt spaces of positive even dimension.

\begin{comment}

As observed in Corollary \ref{C: 2}, 
$\Omega_n^{\Z_p-\text{Witt}}= \Omega_n^{\F_{p^m}-\text{Witt}}$, so we need only compute $\Omega_n^{\Z_p-\text{Witt}}$.

We largely reduce the general case of $\F_q$ to the special case of $\Z_p$ with the following lemma:

\begin{lemma}\label{L: p=q}
Let $q=p^m$, $p$ prime. Then $X$ is $\F_q$-Witt if and only if $X$ is $\Z_p$-Witt.
\end{lemma}
\begin{proof}
As an abelian group, $\F_q\cong \Z_p^m\cong \oplus_{i=1}^m\Z_p$, the $m$-dimensional vector space over $\Z_p$. It is standard that $C_*(X;\oplus_{i=1}^m \Z_p^m)\cong \oplus_{i=1}^m C_*(X;\Z_p)$ as abelian groups.\footnote{We note that this isomorphism is in the category of abelian groups to distinguish from  the partial algebra structure arising from the intersection pairing and which  involves the ring structure on $\F_q$. } Since the boundary map of the chain complex preserves the coefficient splitting $\F_q\cong \oplus_{i=1}^m\Z_p$, it follows that $I^{\bar p}C_*(X;\F_q)\cong \oplus_{i=1}^m I^{\bar p}C_*(X;\Z_p)$, as abelian groups, and, again since the boundary does not mix summands, $I^{\bar p}H_*(X;\F_q)\cong \oplus_{i=1}^m I^{\bar p}H_*(X;\Z_p)$. Thus $I^{\bar p}H_i(X;\F_q)$ vanishes if and only if $I^{\bar p}H_i(X;\Z_p)$ vanishes. The lemma follows.
\end{proof}

\begin{corollary}\label{C: same}
$\Omega_n^{\Z_p-\text{Witt}}= \Omega_n^{\F_{p^m}-\text{Witt}}$ for all $n\geq 0$, $p$ prime, $m>0$. 
\end{corollary}
\end{comment}

We must next show the following:

\begin{proposition}\label{P: p}  $w:\Omega_n^{\Z_p-\text{Witt}}\to W(\Z_p)$ is an isomorphism for all primes $p$ and $n=4k>0$, and $\Omega_n^{\Z_p-\text{Witt}}=0$ for $n\equiv 2\mod 4$.
\end{proposition}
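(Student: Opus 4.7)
The approach is to adapt Siegel's proof for $\Omega^{\Q-\text{Witt}}_*$ from \cite{Si83} to $\Z_p$ coefficients, using the chain-level framework built in Section \ref{S: reduction}. The argument decomposes into three pieces corresponding to the three assertions: vanishing in dimension $4k+2$, surjectivity of $w$, and injectivity of $w$.

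For $n=4k+2$, I would show every such $\Z_p$-Witt $X$ bounds. The middle pairing on $I^{\bar m}H_{2k+1}(X;\Z_p)$ is nondegenerate and $(-1)^{2k+1}$-symmetric. For odd $p$, a skew-symmetric nondegenerate form over $\Z_p$ is automatically hyperbolic and admits a Lagrangian $L$ of half dimension. For $p=2$, where skew-symmetric is not the same as alternating, an additional step is needed: one modifies representative middle cycles by preliminary surgeries (coning off self-intersection $0$-cycles mod $2$) to obtain alternating representatives, after which a Lagrangian again exists. In either case, the Lagrangian coning construction described below then produces a bounding $\Z_p$-Witt space.

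For $n=4k$, surjectivity of $w$ would be established by realizing generators of $W(\Z_p)$: the form $\langle 1\rangle$ is realized by $\C P^{2k}$, and each rank-one form $\langle a\rangle$ with $a\in\Z_p^*$, $p\nmid a$, is realized by the Thom space of a $2k$-plane bundle over $S^{2k}$ with Euler number $a$, which is $\Z_p$-Witt by the calculations of Section \ref{S: worse} (the link is a sphere bundle with vanishing $\Z_p$-homology in middle degree when $p\nmid a$). Since rank-one forms generate $W(\Z_p)$, connected sums exhaust the group. For injectivity, assume $w(X)=0$ for $X^{4k}$, choose a Lagrangian $L\subset I^{\bar m}H_{2k}(X;\Z_p)$ with generating classes represented by $\Z_p$-Witt sub-pseudomanifolds $\xi_1,\ldots,\xi_r$ each satisfying $I^{\bar m}H_k(\xi_i;\Z_p)=0$, and form the adjunction
\[
W \;=\; (X\times[0,1]) \,\cup_{\bigsqcup_i \xi_i\times\{1\}}\, \bigsqcup_i c\xi_i.
\]
By construction $W$ is a $(4k+1)$-dimensional $\Z_p$-Witt pseudomanifold (the new cone vertices have codimension $2k+1$ with Witt-compatible links), and $\partial W=X\sqcup X'$ where $X'$ has strictly smaller middle intersection homology. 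Iterating reduces $X$ to a boundary.

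The main obstacle will be the representability lemma invoked above: every class in $I^{\bar m}H_{2k}(X;\Z_p)$ must have a representative that is itself a $\Z_p$-Witt sub-pseudomanifold with vanishing middle intersection homology. Starting from any PL support of a chain representative, I would induct on dimension, coning off middle-dimensional intersection homology of successive representatives and invoking the proposition at lower dimensions (terminating at dimension $\le 2$, where the result is trivial by Theorem \ref{T: everything}(6)). The subtlety peculiar to $\Z_p$, distinct from Siegel's $\Q$ setting, is that boundaries of $\Z_p$-chains involve cancellations absent integrally, so the sub-pseudomanifold supports must be chosen with the mod-$p$ structure in mind; this tracking is exactly the chain-level analysis already developed in the proof of Lemma \ref{L: K=p}, and I would rely on that framework throughout.
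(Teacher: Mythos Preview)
Your surjectivity argument is essentially the paper's: realize rank-one forms by $\C P^{2k}$ and Thom spaces of bundles with prescribed Euler number. (Minor point: the cone vertex in the Thom space has even codimension $4k$, so the Witt condition there is vacuous; the vanishing of middle $\Z_p$-homology of the sphere bundle is irrelevant.)

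The injectivity argument, however, has a genuine gap. Your bordism
\[
W \;=\; (X\times[0,1]) \,\cup_{\bigsqcup_i \xi_i\times\{1\}}\, \bigsqcup_i c\xi_i
\]
is \emph{not} a $(4k+1)$-dimensional pseudomanifold: the $\xi_i$ are $2k$-dimensional, so each $c\xi_i$ is only $(2k+1)$-dimensional, and gluing it onto the $(4k+1)$-dimensional $X\times[0,1]$ produces a low-dimensional ``fin,'' not a stratified pseudomanifold with dense top stratum. Your claim that the cone vertices have codimension $2k+1$ confuses codimension in $c\xi_i$ with codimension in $W$; in a genuine $(4k+1)$-pseudomanifold a cone point has codimension $4k+1$. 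Correspondingly, $\partial W$ is not $X\sqcup X'$ for any modified $X'$; it is just $X\sqcup X$.

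What is missing is exactly Siegel's regular-neighborhood step, and the paper's proof focuses on precisely the piece you omit. One takes a single isotropic class $[z]$, represents it by a $\Z_p$-\emph{irreducible} cycle (the paper's key modification: lift coefficients to $\{0,\ldots,p-1\}$, separate the resulting copies of each simplex by stratified general position, then pipe them together so that $H_k(|z|;\Z_p)\cong\Z_p$ is generated by $z$), takes a regular neighborhood $U$ of $|z|$ in $X$, and forms the $4k$-dimensional space $\hat U=U\cup_{\partial U}c(\partial U)$. Irreducibility plus $[z]\cdot[z]=0$ forces $I^{\bar m}H_{2k}(\hat U;\Z_p)=0$, so coning on $\hat U$ is dimensionally correct and $\Z_p$-Witt. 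Your ``representability lemma'' tries to bypass this by asking $I^{\bar m}H_k(\xi_i;\Z_p)=0$, but even granting that, the cone is the wrong dimension; and the inductive coning you propose to establish it changes the abstract space $|\xi|$ without explaining why the modified object still sits in $X$ representing the same class. The paper's irreducible-cycle construction is the actual content here.
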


\begin{proof}[Proof of Proposition \ref{P: p}]
The proof  is mostly analogous to that of Siegel's theorem for $\Q$-Witt bordism. The main difference is the proof of surjectivity for $0\neq q\equiv 0\mod 4$. We will demonstrate this surjectivity and then discuss the one significant change from Siegel's proof of injectivity that we must make  in the $\Z_p$ situation. 
The proof that $\Omega_q^{\Z_p-\text{Witt}}=0$ for $q\equiv 2\mod 4$ is included with the proof of injectivity of $w$ for $0<q\equiv 0\mod 4$.

The surjectivity of  $w:\Omega_{4k}^{\Z_p-\text{Witt}}\to W(\Z_p)$ is contained in the following proposition.

\begin{proposition}\label{P: surj}
Given any element of $x\in W(\Z_p)$ and any integer $k>0$, there exists a $4k$-dimensional $\Z_p$-Witt space $X$ whose intersection pairing on $I^{\bar m}H_{2k}(X;\Z_p)$ represents $x$. 
\end{proposition}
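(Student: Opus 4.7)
The plan is to realize each element of the (small, finite) Witt group $W(\Z_p)$ as the middle intersection form of an explicit $4k$-dimensional $\Z_p$-Witt space. Since $W(\Z_p)$ is $\Z_2$ for $p=2$, $\Z_2 \oplus \Z_2$ for $p \equiv 1 \pmod 4$, and $\Z_4$ for $p \equiv 3 \pmod 4$, and is generated as an abelian group in every case by the rank-one diagonal forms $\langle a \rangle$ with $a \in \Z_p^\times$ (by diagonalizability of symmetric bilinear forms for $p$ odd; for $p=2$ the single generator $\langle 1 \rangle$ suffices), it is enough to realize each $\langle a \rangle$ and to show that connected sums of $\Z_p$-Witt spaces realize direct sums of intersection forms.

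For $\langle 1 \rangle$ I would simply take $X = \C P^{2k}$: a closed oriented $4k$-manifold, hence $\Z_p$-Witt with no further work, whose middle intersection form over any field is $\langle 1 \rangle$. This already handles $p=2$. For odd $p$ and a non-square $u \in \Z_p^\times$ I would realize $\langle u \rangle$ by a Thom-space construction. Take an oriented rank-$2k$ real vector bundle $\xi$ over $S^{2k}$ whose Euler number $e \in \Z$ reduces to $u$ modulo $p$; such a $\xi$ exists because the image of the Euler-class map $\pi_{2k-1}(SO(2k)) \to \Z$ is a subgroup of $\Z$ containing $\chi(TS^{2k}) = 2$, hence equal to either $\Z$ or $2\Z$, and since $2$ is a unit modulo odd $p$ the composition with reduction mod $p$ hits all of $\Z_p$. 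Let $M$ be the closed disk bundle of $\xi$ and set $Y = M \cup_{\bd M} c(\bd M)$, the Thom space. Then $Y$ is a compact oriented $4k$-dimensional PL pseudomanifold with a single singular stratum of codimension $4k$, whose link is the sphere bundle $S(\xi)$, a manifold of odd dimension $4k-1$. Since the $\Z_p$-Witt condition only constrains even-dimensional links, $Y$ is automatically $\Z_p$-Witt. Combining the $\bar m$-intersection-homology formula for a coned-off manifold with boundary from Section~\ref{S: worse} with the Thom isomorphism identifies $I^{\bar m}H_{2k}(Y;\Z_p)$ with the image of $e \cap \cdot : H_{2k}(S^{2k};\Z_p) \to H_0(S^{2k};\Z_p)$, which is $\Z_p$ generated by the image of the zero section $[S^{2k}]$, and the self-intersection of this generator equals the Euler number $e \equiv u \pmod p$. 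Hence the middle $\Z_p$-intersection form of $Y$ is $\langle u \rangle$.

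Given any $q = \bigoplus_i \langle a_i \rangle \in W(\Z_p)$, I would then realize it by the connected sum $X = \#_i X_i$, where each $X_i$ realizes $\langle a_i \rangle$ via one of the two constructions above. Both $\C P^{2k}$ and the Thom space $Y$ contain open dense manifold loci, so the connected sums can be performed at manifold points; the result is a compact oriented $\Z_p$-Witt pseudomanifold whose middle $\Z_p$-intersection form is the orthogonal direct sum of the forms of the factors, by the standard Mayer--Vietoris argument as in the manifold case. The main technical obstacle is the last step of the Thom-space analysis: verifying that the zero section $[S^{2k}]$ is genuinely represented by a $\bar m$-allowable cycle in $Y$ (which it is, since it avoids the cone vertex), that its class generates $I^{\bar m}H_{2k}(Y;\Z_p)$, and that its self-intersection equals the Euler number $e$. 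These verifications closely parallel the Haefliger-style computations already made for the $\C P^2$-based examples in Section~\ref{S: worse}, so the work mainly amounts to porting those arguments to the present bundle-over-sphere setting.
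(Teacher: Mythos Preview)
Your proposal is correct and follows essentially the same strategy as the paper: realize the rank-one generators of $W(\Z_p)$ by $\C P^{2k}$ and Thom spaces of rank-$2k$ bundles over $S^{2k}$ with suitable Euler number, then take connected sums. The one noteworthy difference is in how the bundle is produced. The paper constructs a bundle with Euler number exactly $s$ (an integer representative of the non-square class) concretely, as the normal bundle of the sphere $\#_{i=1}^s V_i$ inside $\#_{i=1}^s \C P^{2k}$, tying back to the explicit examples of Section~\ref{S: worse}; you instead argue abstractly that the image of the Euler-number map $\pi_{2k-1}(SO(2k))\to\Z$ contains $\chi(TS^{2k})=2$ and hence surjects onto $\Z_p$ for odd $p$. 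Both are valid; the paper's version is more hands-on and avoids any appeal to homotopy groups of $SO$, while yours is a bit slicker but less explicit. The paper also economizes slightly for $p\equiv 3\pmod 4$, where $W(\Z_p)\cong\Z_4$ is cyclic on $\langle 1\rangle$, so connected sums of $\C P^{2k}$ alone suffice and no Thom space is needed.
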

\begin{proof}
It follows from the theory of Witt rings (see \cite[Chapter 2]{LAM}) that for a finite field $F$, the Witt ring $W(F)$ is isomorphic to a semi-direct product $Q(F)=\Z_2 \ltimes \dot F/\dot F^2$, where $\dot F$ is the 
multiplicative group $F-\{0\}$ and $\dot F/\dot F^2\cong \Z_2$. The group operation in $Q(F)$ is given by $(e,f)\cdot (e',f')=(e+e',(-1)^{ee'}ff')$, and the isomorphism $W(F)\to Q(F)$ is given by the pair of operators $(\dim_0,d_{\pm})$. Here $\dim_0$ takes a representative symmetric bilinear form to its dimension $\mod 2$ and $d_{\pm}$ takes a representative form of dimension $n$ to $(-1)^{n(n-1)/2}$ times the determinant of the matrix representing the form.

When $F=\F_q$ with $q\equiv 3 \mod 4$, then $W(F)\cong Q(F)\cong \Z_4$. The generator is $(1,1)$ and $Q(F)=\{(0,1), (1,1), (0,-1),(1,0)\}$ (note that $-1$ is not a square when $q\equiv 3 \mod 4$). In terms of forms, $W(F)$ is therefore generated by $\langle 1\rangle$.

If $q\equiv 1 \mod 4$, then $Q(F)$ splits as $\Z_2\times \Z_2$. If $s\in \F_q$ is not a square, then the elements of $Q(F)$ are represented by $\{(0,1), (1,1), (1,s), (0,s)\}$, generated in $W(F)$ by $\langle 1\rangle$ and $\langle s\rangle$.

If $q$ is even, then $W(\F_q)\cong \Z_2$ \cite{MH}.

So when $p\equiv 3 \mod 4$, it is easy to find manifolds of dimension $4k$, $k\geq 1$ whose middle dimensional pairings represent any element of the Witt group. Specifically, the pairing on $\C P^{2k}$ is the generator of $W(\Z_p)$ and taking connected sums of $\C P^k$s yields representatives of any element of $W(\Z_p)$.

For $p\equiv 1 \mod 4$, $\C P^{2k}$ again yields the generator $\langle 1\rangle$ of the Witt group $W(\Z_p)$. To obtain the generator $\langle s\rangle$, we can proceed with Thom spaces as in Section \ref{S: worse}. If $s>0$ represents  a non-square unit in $\Z_p$, let $U$ be the normal bundle of the connected sum of $s$ copies of the generator $[V_i]\in H_{2k}(\C P^{2k})$, represented by embedded spheres $S_i^{2k}$, in the connected sum of $s$ copies of $\C P^{2k}$.\footnote{Note that here it is crucial that we work in $\Z_p$ and not some $\F_{p^n}$ so that $s$ can be represented by an integer.} Let $\hat U$ be the associated Thom space. Then, as in the computations in Section \ref{S: worse}, $I^{\bar m}H_{2k}(\hat U;\Z_p)\cong \text{Im: } (H_{2k}(S^{2n};\Z_p)\cong \Z_p \overset{\cap e}{\to} H_0(S^{2k};\Z_p)\cong \Z_p)$. Since the euler number of the bundle is the unit $s$, $I^{\bar m}H_{2k}(\hat U;\Z_p)\cong \Z_p$ generated by $[V]=\sum [V_i]=[S^{2n}]$. Furthermore, $[V]\cdot [V]=s$. Thus, since the only singularity of $\hat U$ has codimension $4k$,  $\hat U$ is a Witt space with intersection form $\langle s\rangle$. So we have constructed spaces representing both generators of $W(\Z_p)$. 

For $p=2$, $W(\Z_p)$ is generated simply by $\langle 1\rangle$, and we can use again any $\C P^{2k}$ as a geometric realization.
\end{proof}

Turning to the injectivity of  $w:\Omega_{4k}^{\Z_p-\text{Witt}}\to W(\Z_p)$, as well as the fact that $\Omega_{n}^{\Z_p-\text{Witt}}=0$ for $n\equiv 2\mod 4$, we note again that the proof is nearly identical to that of the $\Q$-Witt case, though  some care must be taken, primarily with the use of geometric cycles (see the discussion at the end of Section \ref{S: Witt maps}
 for more elaboration on what can go wrong over fields more general than $\Z_p$). We discuss this issue and refer the reader to \cite{Si83} for the remainder of the proof.

Siegel's proof of injectivity over $\Q$ begins by supposing we have a $\Q$-Witt space  $X^{2k}$ with an isotropic element $[z]\in I^{\bar m}H_k(X;\Q)$, i.e. $[z]\cdot [z]= 0$. Siegel then finds a representative of $[z]$ by an \emph{irreducible cycle} $z$, meaning that  $H_k(|z|;\Z)\cong \Z$ and such that the generator of this homology group has coefficient $\pm 1$ on every $k$ simplex of $|z|$ in some triangulation of $|z|$. 
The key point is that the support $|z|$ of $z$ should have infinite cyclic $k$th homology, and it should be generated by $|z|$, itself, considered as the cycle represented by its fundamental class. 

In the $\Z_p$ case, the construction of $z$ should be altered slightly. 
Given any $2k$-dimensional Witt space, any class $[z]\in I^{\bar m}H_k(X;\Z_p)$ will satisfy $[z]\cdot [z]=0$ if $2k\equiv 2\mod 4$, while if $2k\equiv 0\mod 4$ and $w(X)=0\in W(\Z_p)$, such a cycle certainly exists.  We   construct a ``$\Z_p$-irreducible'' representative cycle $z$ for $[z]$ by slightly modifying Siegel's construction to obtain a $z$ such that $H_k(|z|;\Z_p)\cong \Z_p$, generated by a ``fundamental class'' of $|z|$. The quotation marks indicate that this is not quite the right language since $|z|$ might have a codimension one singularity - however since $z$ is a cycle, these singularities will cancel when thinking of $z$ as a chain, and so the idea of a fundamental class makes some sense.  

Briefly, we make $z$ irreducible  as follows: Choose an arbitrary representative $y$ for  $[z]$ in some triangulation of $X$. Then $y=\sum n_i\sigma_i$, where $n_i\in \Z_p$ and each $\sigma_i$ is a unique oriented $k$-simplex. Choose $m_i\equiv n_i\mod p$ such that $0\leq m_i<p$, and abusing notation, let $y=\sum m_i\sigma_i$. Note that the interior of each $k$-simplex of $y$ lies in $X-\Sigma$ due to the allowability conditions for an intersection chain. Now, for each $\sigma_i$ such that $m_i\neq 0$, we may use a relative stratified general position argument following McCrory \cite{Mc78} (see also \cite{GBF18}) to PL isotope the interiors of the  $m_i$ copies of $\sigma_i$, rel boundary, into stratified  general position with respect to each other, and in such a way that none of the new $m_i$ copies of $\sigma_i$ intersect any of the $m_j$ copies of $\sigma_j$ similarly created, except along boundaries. This is more or less an alternative description of Step 1 of the proof of Siegel's \cite[Lemma 2.2]{Si83}, except that Siegel separates by isotopy entire open $j$-strata of $|y|$. Arguments along the lines of the stratified homotopy invariance of intersection homology (see \cite{GBF3}) show that this new chain, $\bar y$, also represents $[z]$, and clearly $\bd \bar y=\bd y=0\in C_{k-1}(X;\Z_p)$. Furthermore, $y$ has the form $y=\sum \sigma_i$, where the sum is taken over those oriented simplices in the support of $y$. To form $z$, one then connects all of the $k$-simplices of $y$ by orientation respecting pipes; see \cite[page 1087]{Si83} for more details. Then one also has $z$ of the form $z=\sum \sigma_i$ (with different $\sigma$s from $\bar y$), and clearly $z$ generates $H_k(|z|;\Z_p)\cong \Z_p$.

This $z$ can then be used in the remainder of a $\Z_p$ analogue of Siegel's injectivity proof. 

\end{proof}

This completes the proof of Theorem \ref{T: F-Witt}.\hfill\qedsymbol

\begin{comment}
\begin{remark}

It is at this step, incidentally, that attempts to prove Siegel's theorem over more general fields breaks down. It was critical in the above procedure that the coefficient

\end{remark}
\end{comment}

\begin{remark}\label{R: more spaces}
Incidentally, the process of the injectivity proof can be used to find many more examples of $\Z_p$-Witt spaces with vanishing middle-dimensional $I^{\bar m}H_k$ since, by copying the proof of Siegel's \cite[Proposition III.3.1]{Si83} exactly, $I^{\bar m}H_k(\hat U;\Z_p)=0$, where $U$ is a regular neighborhood of our irreducible $z$ (still assuming $[z]\cdot[z]=0$), and $\hat U=U\cup \bar c(\bd U)$. 
\end{remark}

\subsection{Witt bordism over finite fields}\label{S: Witt maps}

In this section, we observe the ramifications of the results of the previous section to $\Omega_n^{\F_q-\text{Witt}}$ for the finite field $\F_q$. We also provide an  illustrative example.

\begin{proposition}
For $n\equiv 0\mod 4$, $n>0$, the homomorphism $w:\Omega_n^{\F_q-\text{Witt}}\to W(\F_q)$ that assigns to $X^n$ the intersection form on $ I^{\bar m}H_{n/2}(X;\F_p)$ is an isomorphism, except when $q=p^m$, $p\equiv 3\mod 4$, $m$ even. In these exceptional cases, 
$\Omega_n^{\F_{p^m}-\text{Witt}}$ is isomorphic to $W(\Z_p)$, and $w$ is the  homomorphism induced by inclusion $W(\Z_p)\to W(\F_{p^m})$.
\end{proposition}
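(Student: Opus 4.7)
The plan is to apply the commutative diagram (\ref{E: commute}) to the field inclusion $\Z_p\hookrightarrow \F_q=\F_{p^m}$ and reduce the proposition to a concrete question about the inclusion-induced map $i_*:W(\Z_p)\to W(\F_{p^m})$. By Corollary~\ref{C: 2} the left vertical arrow in (\ref{E: commute}) is the identification $\Omega_n^{\F_q-\text{Witt}}=\Omega_n^{\Z_p-\text{Witt}}$, and Theorem~\ref{T: F-Witt} says the top arrow $w_{\Z_p}$ is an isomorphism onto $W(\Z_p)$ for $n\equiv 0\pmod 4$ with $n>0$. Hence $w_{\F_q}=i_*\circ w_{\Z_p}$, so the proposition reduces to deciding when $i_*$ is an isomorphism and, in the exceptional case, observing that $w_{\F_q}$ is then by construction the inclusion-induced homomorphism $W(\Z_p)\to W(\F_{p^m})$.

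To analyze $i_*$, I would do a short computation of square classes, using the $Q$-representation from the proof of Proposition~\ref{P: surj}: $W(F)\cong \Z_2\ltimes \dot F/\dot F^2$ for $F$ finite of odd characteristic, giving $\Z_4$ if $q\equiv 3\pmod 4$ and $\Z_2\times \Z_2$ if $q\equiv 1\pmod 4$. The behavior of $i_*$ is then controlled by which non-squares in $\Z_p^*$ remain non-squares in $\F_{p^m}^*$: for $a\in\Z_p^*$ we have $a^{p-1}=1$, while $p^m-1=(p-1)(1+p+\cdots+p^{m-1})$ with $1+p+\cdots+p^{m-1}\equiv m\pmod 2$ for odd $p$. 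Therefore $a^{(p^m-1)/2}=\bigl(a^{(p-1)/2}\bigr)^{m\bmod 2}$, so $a\in\Z_p^*$ is a square in $\F_{p^m}^*$ iff either $m$ is even or $a$ is already a square in $\Z_p^*$.

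With these two ingredients, the case analysis writes itself. For $p=2$, both Witt groups are $\Z_2$ with generator $\langle 1\rangle$, so $i_*$ is trivially an isomorphism. For $p$ odd and $m$ odd, square classes are preserved by the inclusion and $p^m\equiv p\pmod 4$, so the two $Q$-representations match generator-for-generator and $i_*$ is an isomorphism. For $p$ odd and $m$ even one has $p^m\equiv 1\pmod 4$ so $W(\F_{p^m})\cong \Z_2\times \Z_2$; in the exceptional subcase $p\equiv 3\pmod 4$ the domain $W(\Z_p)\cong \Z_4$ has $4$-torsion while the target is $2$-torsion, so $i_*$ cannot be an isomorphism. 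In this case the commutative diagram immediately identifies $w_{\F_q}$ with the inclusion-induced map, yielding the second claim of the proposition.

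The main obstacle is the careful bookkeeping needed to match the abstract group structure of $W(\F_q)$ (governed by $q\bmod 4$) against the concrete image of $i_*$, generated by $\langle 1\rangle$ and the class of a non-square $\langle s\rangle$ pulled back from $\Z_p$. Everything else is routine once the diagram (\ref{E: commute}) is applied and $w_{\Z_p}$ is known to be an isomorphism from Theorem~\ref{T: F-Witt}.
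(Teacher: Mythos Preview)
Your overall strategy is exactly the paper's: invoke diagram \eqref{E: commute}, Corollary~\ref{C: 2}, and Theorem~\ref{T: F-Witt} to identify $w_{\F_q}$ with the inclusion-induced map $i_*:W(\Z_p)\to W(\F_{p^m})$, and then analyze $i_*$ case by case using the $Q(F)$ description of Witt groups of finite fields.

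There is, however, a genuine gap in your case analysis: you never settle the case $p\equiv 1\pmod 4$ with $m$ even. You observe that $W(\F_{p^m})\cong\Z_2\times\Z_2$ there, single out the subcase $p\equiv 3\pmod 4$ as exceptional, and stop. The paper does cover the omitted case, arguing that since $W(\Z_p)\cong W(\F_{p^m})\cong\Z_2\times\Z_2$ abstractly and $i_*$ ``preserves dimension and determinant of the form'' (a complete set of invariants), $i_*$ must be an isomorphism. But your own square-class computation shows this reasoning is problematic: for $m$ even, \emph{every} element of $\Z_p^*$ becomes a square in $\F_{p^m}^*$, so the induced map on discriminant values $\dot\Z_p/\dot\Z_p^2\to\dot\F_{p^m}/\dot\F_{p^m}^2$ is trivial rather than an isomorphism. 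Concretely, if $s$ is a non-square in $\Z_p$ then $i_*\langle s\rangle=\langle 1\rangle$ in $W(\F_{p^m})$, so $i_*$ has image $\{0,\langle 1\rangle\}\cong\Z_2$ and nontrivial kernel generated by $\langle 1,-s\rangle$; for instance, $\langle 1,-2\rangle$ is nonzero in $W(\Z_5)$ but hyperbolic in $W(\F_{25})$ since $2$ is a square there. Thus the case you left out is in fact another exceptional one, and both your argument and the paper's verification that $i_*$ is an isomorphism there appear to be in error. You cannot complete the proof as written, because the proposition as stated fails for $p\equiv 1\pmod 4$ with $m$ even; the correct dichotomy is that $w_{\F_q}$ is an isomorphism precisely when $q$ is a power of $2$ or $q=p^m$ with $p$ odd and $m$ odd.
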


The proposition follows from Theorem \ref{T: F-Witt}, Corollary \ref{C: 2}, and the commutativity of diagram \eqref{E: commute} for $k,m>0$. In particular, Corollary \ref{C: 2} tells us that the map $w:\Omega_{4k}^{\Z_p-\text{Witt}}\to W(\Z_p)$ is an isomorphism, and so from the commutativity of the diagram, $w:\Omega_{4k}^{\F_{p^m}-\text{Witt}}\to W(\F_{p^m})$ is isomorphic to the homomorphism $W(\Z_p)\to W(\F_{p^m})$. We recall the properties of this homomorphism.

Consider the homomorphism $\psi: W(\Z_p)\into W(\F_{p^m})$ induced by the natural inclusion $\Z_p\into \F_{p^m}$. If $p\equiv 1\mod 4$, then $p^m\equiv 1\mod 4$ for all $m>0$, while if $p\equiv 3\mod 4$, $p^{2m+1}\equiv 3\mod 4$ and $p^{2m}\equiv 1\mod 4$. In all cases except the last ($p^{2m}$ for $p\equiv 3\mod 4$),
$\psi$ is an isomorphism. This follows from the discussion in the proof of Proposition \ref{P: surj}: in these cases we know that $W(\Z_p)\cong W(F_{p^m})$ abstractly and $\psi$ clearly preserves dimension and determinant of the form, which constitute a complete set of invariants.

In the exceptional case, $W(\Z_p)\cong \Z_4$, generated by $\langle 1\rangle$, but $W(\F_{p^{2m}})\cong \Z_2\oplus \Z_2$, in which $\langle 1\rangle$ is one of the generators. So $\psi$ maps $W(\Z_p)$ onto one of the summands of $W(\F_{p^m})$ with kernel $\Z_2$.

What does this mean geometrically? As an example, consider $\Z_3$ and $\F_9\cong \Z_3[x]/\langle x^2-2\rangle$. Let $\{1,x\}$ denote a basis of $\F_9$ as a $2$-dimensional vector space over $\Z_3$. We showed above that   $\C P^2$ is a generator of $\Omega_4^{\Z_3-\text{Witt}}$ corresponding to $\langle 1\rangle$ in $W(\Z_3)$. According to the results of the preceding section, $\Omega_4^{\Z_3-\text{Witt}}\cong W(\Z_3)\cong \Z_4$ so that $\C P^2$ has order $4$. Since $\Omega_4^{\Z_3-\text{Witt}}=\Omega_4^{\F_9-\text{Witt}}$, $\C P^2$ also has order $4$ in the latter group. However $W(\F_9)\cong \Z_2\oplus \Z_2$, so,  in particular, $w(\C P^2\#\C P^2)=0\in W(\F_9)$. To see where a proof of injectivity along the lines of Siegel breaks down, observe that $w(\C P^2\#\C P^2)$ is represented by $\begin{pmatrix}1&0\\0&1\end{pmatrix}$. Since this pairing is nonzero in $W(\Z_3)$, in order to find an isotropic element over $\F_9$, we must mix  $1$ and $x$ nontrivially. For example, let $V_1$ and $V_2$ denote two disjoint spheres in $\C P^2\#\C P^2$ generating $I^{\bar m}H_2(\C P^2\#\C P^2;\F_9)\cong H_2(\C P^2\#\C P^2;\F_9)\cong H_2(\C P^2\#\C P^2)\otimes \F_9$. An isotropic element is  given by $[V_1]+x[V_2]$, since $([V_1]+x[V_2])\cdot([V_1]+x[V_2])=1+x^2=1+2=0\in \F_9$. There is no way to create a cycle representing $[V_1]+x[V_2]$ that is irreducible in the sense discussed in the injectivity  proof in Section \ref{S: finite}, since the mixing of $1$ and $x$ coefficients would prevent us from piping simplices together (for that matter, we cannot make sense of taking $x$ copies of something).

\subsection{$\Z_p$-Witt bordism as a generalized homology theory}\label{S: homology}

This subsection contains a computation of the homology theory $\Omega^{\Z_p-\text{Witt}}_*(\cdot)$. As for rational Witt bordism, $K$-Witt bordism yields a generalized homology theory for any $K$; as noted by Siegel \cite[Chapter IV]{Si83}, this follows from Akin \cite[Proposition 7]{Ak75}, making the obvious generalizations from unoriented to oriented bordism. Akin's axioms are easy to check for $K$-Witt spaces with boundary, by making use of the collars on the boundaries to see that Akin's cuttings and pastings do not create new links that would violate the Witt conditions. 

It follows from the results of the preceding sections that the only parameter that matters in $K$-Witt-bordism is the characteristic of the field $K$. By Siegel, $\Omega^{\Q-\text{Witt}}_*(\cdot)\cong KO[1/2]_*(\cdot )$, so we focus on fields of finite characteristic.  In particular, there is no loss of generality limiting ourselves to $\Z_p$. 
It turns out that $\Omega^{\Z_p-\text{Witt}}_*(\cdot)$ splits as a sum of ordinary homology theories with coefficients, which is the content of the following theorem. 

\begin{theorem}\label{T: EM}
As a homology theory on CW complexes, $\Z_p$-Witt bordism splits as a direct sum of ordinary homology with coefficients. In particular, $$\Omega^{\Z_p-\text{Witt}}_n(X)\cong  \bigoplus_{r+s=n} H_r(X;\Omega^{\Z_p-\text{Witt}}_s).$$ 
\end{theorem}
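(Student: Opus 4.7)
The plan is to realize $\Z_p$-Witt bordism by a spectrum $E$ and then prove the theorem by establishing a splitting of spectra
$$E \;\simeq\; H\Z \;\vee\; \bigvee_{k\geq 1}\Sigma^{4k}HW(\Z_p).$$
Such a splitting immediately yields the claimed decomposition, since $(HA)_r(X)\cong H_r(X;A)$ for any abelian group $A$.  Existence of a representing spectrum follows from Akin's theorem (already invoked at the start of this subsection), and by Theorem \ref{T: F-Witt} its homotopy groups are $\pi_0 E\cong\Z$, $\pi_{4k} E\cong W(\Z_p)$ for each $k\geq 1$, and zero in all other degrees.  The groups $W(\Z_p)$ are finite of order at most $4$, so $E$ is a connective spectrum whose positive-degree homotopy is bounded torsion concentrated in degrees divisible by $4$.

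To construct the splitting I would build a natural transformation into the right-hand side realizing the correct map on coefficients.  The $H\Z$ summand is supplied by the fundamental-class homomorphism $\mu\colon [f\colon Y^n\to X]\mapsto f_*[Y]\in H_n(X;\Z)$, which makes sense because a compact oriented PL pseudomanifold carries a canonical fundamental class.  For each $k\geq 1$, define a \emph{higher Witt characteristic} $\nu_k\colon \Omega^{\Z_p-\text{Witt}}_n(X)\to H_{n-4k}(X;W(\Z_p))$ geometrically as follows: given $[f\colon Y\to X]$ and a dual codimension-$4k$ cycle in $X$, isotope $f$ into transverse position, recognize the transverse preimage as a $4k$-dimensional $\Z_p$-Witt space (using a stratified transversality theorem for Witt spaces in the spirit of the McCrory result invoked in Section \ref{S: finite}), and record its Witt class in $W(\Z_p)$.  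Packaged together, $(\mu,\nu_1,\nu_2,\dots)$ is a natural transformation of generalized homology theories on CW complexes, and by Theorem \ref{T: F-Witt} it is an isomorphism when $X$ is a point.  A standard Atiyah--Hirzebruch comparison argument (induction over CW skeleta, the five-lemma, and a colimit) then upgrades this to an isomorphism on all CW complexes.

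The main obstacle is the construction of the $\nu_k$: one needs both a Witt-space transversality theorem and the verification that the resulting Witt class depends only on bordism and homology classes, not on the choices made to achieve transversality.  An alternative, purely homotopy-theoretic route sidesteps geometric transversality: work with the Postnikov tower of $E$ and show that every $k$-invariant vanishes.  The first potential obstruction lies in $H^{4k+1}(H\Z;W(\Z_p))$ for the smallest $k$ with $W(\Z_p)\neq 0$, and subsequent obstructions in analogous cohomology groups of Eilenberg--MacLane spectra; these have to be forced to vanish using the ring-spectrum structure on $E$ together with the vanishing of $\Omega^{\Z_p-\text{Witt}}_n$ in all degrees not of the form $4k$.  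Either route reduces the theorem to verifying the splitting at the spectrum level, after which taking $(-)_*(X)$ produces the stated direct sum.
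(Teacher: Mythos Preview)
Your proposal sketches two routes but completes neither, and both have genuine gaps as written. For the geometric route, you correctly flag that constructing the $\nu_k$ requires a stratified transversality theorem for maps of Witt spaces into CW complexes together with a bordism-invariance check; this is substantial and not supplied by the McCrory-type general position used in Section~\ref{S: finite}, which only moves chains inside a single pseudomanifold. For the Postnikov route, saying the $k$-invariants ``have to be forced to vanish using the ring-spectrum structure'' is not an argument: the relevant groups such as $H^{4k+1}(H\Z;W(\Z_p))$ are nonzero in general, and you give no mechanism that kills the obstruction. In particular you never establish a ring (or module) structure on the spectrum, which is exactly the structural input needed.

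The paper takes a different and more efficient route. It first shows (as a separate lemma) that the representing spectrum $W_p$ is an $MSO$-\emph{module} spectrum, by exhibiting the obvious geometric quasi-module pairing and then invoking a module-spectrum version of a theorem of Rudyak to rigidify it. With this in hand, the paper applies a theorem of Taylor and Williams: any $2$-local $MSO$-module spectrum splits as a wedge of Eilenberg--MacLane spectra. Since $\Omega^{\Z_p-\text{Witt}}_s$ is $2$-primary for $s>0$, the Atiyah--Hirzebruch spectral sequence for $\Omega^{\Z_p-\text{Witt}}_*(X)\otimes\Z_{(2)}$ degenerates at $E^2$; the odd-torsion is handled separately (it lives entirely on the axis $s=0$), and reassembling gives the claimed decomposition. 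In effect the Taylor--Williams theorem is what makes your Postnikov-tower idea work, but the crucial missing ingredient in your proposal is the $MSO$-module structure---that is the leverage that forces the $k$-invariants to vanish, and you did not identify it.
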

\begin{proof}
Brown representability tells us that $\Z_p$-Witt bordism is representable by a spectrum; see, e.g. \cite[Theorem 14.35, Corollary 14.36, and Remark 1 on page 331]{SWITZER}. To see that $\Z_p$-Witt bordism on  CW complexes satisfies the wedge axiom, and hence the conditions in \cite[Remark 1]{SWITZER}, note the unreduced version of the wedge axiom within the proof of \cite[Proposition 10.16]{SWITZER} and that our bordism theory  breaks into a sum over what happens on connected components. We let $W_p$ denote the spectrum yielding $\Z_p$-Witt bordism. We will show below that $W_p$ is, in fact, an $MSO$-module spectrum, but the proof of this fact is deferred for now.

Let $X$ be a compact CW complex so that the homology of $X$ is finitely generated. We consider the Atiyah-Hirzebruch spectral sequence for $\Omega^{\Z_p-\text{Witt}}_*(X)$, whose $E^2_{r,s}$ term is $H_r(X;\Omega^{\Z_p-\text{Witt}}_s)$. Since $\Omega^{\Z_p-\text{Witt}}_s$ is $2$-primary for all $s>0$, the only odd torsion in the spectral sequence is in the terms  $H_r(X;\Omega^{\Z_p-\text{Witt}}_0)\cong H_r(X;\Z)$, which lie along the $x$-axis. It follows that the odd torsion part of the spectral sequence splits off and that the  odd torsion subgroup of $\Omega^{\Z_p-\text{Witt}}_n(X)$ is isomorphic to the odd torsion subgroup of $H_n(X)$. 

Now, for the rest of $\Omega^{\Z_p-\text{Witt}}_n(X)$, we consider instead the theory $\Omega^{\Z_p-\text{Witt}}_n(X;\Z_{(2)})\cong \Omega^{\Z_p-\text{Witt}}_n(X)\otimes \Z_{(2)} $ with 2-local coefficients (see \cite[Section II.5]{RUD} for a general reference). In this case, the Atiyah-Hirzebruch $E^2$ terms look like $H_r(X;\Omega^{\Z_p-\text{Witt}}_s\otimes \Z_{(2)})$. Since $\Omega^{\Z_p-\text{Witt}}_s$ is 2-torsion for $s>0$, in this case the terms are identical to those from the spectral sequence before the localization, while the $(r,0)$ term becomes $H_r(X;\Z_{(2)})$. Now, this spectral sequence degenerates at the $E^2$ term by a theorem of Taylor and Williams \cite[Section 2]{TW79} that states that any 2-local MSO-module spectrum (in this case $(W_p)_{(2)}$) splits as a wedge of Eilenberg-MacLane spectra.  
The upshot of this is the degeneration of the Atiyah-Hirzebruch spectral sequence.  By this degeneration, $$\Omega^{\Z_p-\text{Witt}}_n(X)\otimes \Z_{(2)}\cong \Omega^{\Z_p-\text{Witt}}_n(X;\Z_{(2)})\cong H_n(X;\Z_{(2)}) \oplus \bigoplus_{\overset{r+s=n}{s>0}} H_r(X;\Omega^{\Z_p-\text{Witt}}_s).$$

The group $\Omega^{\Z_p-\text{Witt}}_n(X)$ must be finitely generated (since everything else in the spectral sequence is), so, by basic facts about localization, we can read off the $2$-primary and infinite cyclic summands of $\Omega^{\Z_p-\text{Witt}}_n(X)$ from $\Omega^{\Z_p-\text{Witt}}_n(X)\otimes \Z_{(2)}$. In particular, any $2$-primary components are shared between the two, and $\Z$ summands of the former, correspond bijectively with 
 $\Z_{(2)}$ terms in the latter, which can come only from the summand $H_n(X;\Z_{(2)})$. Thus, modulo odd torsion, we must have $\Omega^{\Z_p-\text{Witt}}_n(X)\cong H_n(X;\Z) \oplus \displaystyle\bigoplus_{\overset{r+s=n}{s>0}} H_r(X;\Omega^{\Z_p-\text{Witt}}_s).$
 
But now we know that the odd torsion of $\Omega^{\Z_p-\text{Witt}}_n(X)$ is that of $H_n(X)$. 
So, putting everything together, we must have 
$$\Omega^{\Z_p-\text{Witt}}_n(X)\cong H_n(X;\Z) \oplus \bigoplus_{\overset{r+s=n}{s>0}} H_r(X;\Omega^{\Z_p-\text{Witt}}_s)\cong \bigoplus_{r+s=n} H_r(X;\Omega^{\Z_p-\text{Witt}}_s),$$ 
since $\Omega^{\Z_p-\text{Witt}}_0\cong \Z$.

For infinite CW complexes, we can now use the preceding formula and take direct limits (see \cite[page 331, Remark 1]{SWITZER}).
\end{proof}

Now we return to the following lemma.

\begin{lemma}
$W_p$ is an $MSO$-module spectrum. 
\end{lemma}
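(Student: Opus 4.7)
The plan is to build the module structure geometrically via Cartesian product and then lift it to the spectrum level by representability. Given a closed oriented PL manifold $M^m$ and a closed oriented $\Z_p$-Witt space $Z^n$, the product $M\times Z$ inherits a natural PL stratification: its singular strata are precisely $M\times Z_i$, where $Z_i$ runs over the singular strata of $Z$, and the link of $M\times Z_i$ in $M\times Z$ is canonically identified with the link of $Z_i$ in $Z$. Consequently all link intersection homology groups that must vanish for $M\times Z$ to be $\Z_p$-Witt are identified with the corresponding vanishing groups for $Z$, so $M\times Z$ is itself a $\Z_p$-Witt space, oriented via the product of orientations. The same analysis applies in the presence of boundaries (using the product collar), so if either factor bounds in its own bordism theory the product bounds as a $\Z_p$-Witt space with boundary.

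Passing to the relative setting, sending $(f\colon M\to X,\,g\colon Z\to Y)$ to $f\times g\colon M\times Z\to X\times Y$ defines a bilinear pairing
\[
\mu\colon MSO_m(X)\otimes \Omega^{\Z_p-\text{Witt}}_n(Y)\longrightarrow \Omega^{\Z_p-\text{Witt}}_{m+n}(X\times Y),
\]
natural in $X$ and $Y$. I would then verify that $\mu$ is compatible with the suspension isomorphisms of the two theories: both suspension isomorphisms are realized by the standard cone-and-collapse construction on representing cycles, and Cartesian product commutes with this construction, so $\mu$ extends to a pairing of reduced homology theories in the sense of, e.g., Switzer, Ch.~13. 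One should also record that $\mu$ is associative and unital at the level of bordism groups—this is immediate from associativity and unitality of Cartesian product, with the point (as generator of $\Omega^{SO}_0\cong\Z$) serving as the two-sided identity.

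Having a suspension-compatible natural bilinear pairing of reduced homology theories on CW complexes, I invoke the representability of external pairings of homology theories (Switzer, Ch.~13--14): such a pairing is induced, uniquely up to homotopy, by a map of spectra $MSO\wedge W_p\to W_p$. The ring-spectrum structure on $MSO$ is realized by the corresponding product pairing on oriented bordism, and the associativity and unit diagrams for the putative $MSO$-module structure on $W_p$ correspond, after applying the representability functor, to the obvious associativity and unit diagrams for Cartesian product on representatives. The uniqueness clause in representability then forces the module-axiom diagrams to commute up to homotopy, giving $W_p$ the required $MSO$-module spectrum structure.

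The main obstacle I expect is checking that the geometric pairing actually extends to the full homology-theoretic pairing with the correct suspension compatibility, rather than only to a pairing of coefficient groups; once that is in place, the lift to a module structure on the spectrum is entirely formal via Brown representability and the uniqueness of the representing pairing.
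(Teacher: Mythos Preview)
Your geometric construction of the product pairing is fine and is exactly how the paper begins: Cartesian product does yield a well-defined, associative, unital pairing at the level of bordism groups, i.e.\ a \emph{quasi}-$MSO$-module structure on $W_p$ in Rudyak's terminology. The gap is in the final step, where you assert that ``the lift to a module structure on the spectrum is entirely formal via Brown representability and the uniqueness of the representing pairing.'' This is precisely the step that is \emph{not} formal. The paper explicitly warns: ``It is not true in general that being a quasi-module spectrum leads to being an actual module spectrum.'' Brown representability does produce a map $\mu\colon MSO\wedge W_p\to W_p$ inducing the bordism-level pairing, but two maps of spectra that induce the same natural transformation of homology theories can differ by a phantom map. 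In particular, the two composites in the associativity square (and the unit square) agree on homology by the associativity of Cartesian product, but that does not force them to agree up to homotopy; the obstructions live in $\varprojlim^1$ groups taken over the skeletal filtration.

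The paper fills this gap by invoking a module-spectrum version of Rudyak's Theorem III.7.3: since $MSO$ and $W_p$ have finite $\Z$-type, the quasi-module structure lifts to a genuine (and unique) $MSO$-module structure provided the three groups
\[
\varprojlim^1\{W_p^{-1}(W_p^{(n)})\},\quad \varprojlim^1\{W_p^{-1}(MSO^{(n)}\wedge W_p^{(n)})\},\quad \varprojlim^1\{W_p^{-1}(MSO^{(n)}\wedge MSO^{(n)}\wedge W_p^{(n)})\}
\]
all vanish. The paper then verifies this via Atiyah-Hirzebruch spectral sequence arguments, using connectivity of the spectra involved and the fact that the coefficient groups $\Omega^{\Z_p\text{-Witt}}_s$ are finite for $s>0$, so that each $W_p^{-1}(\mf X^{(n)})$ is finite and the $\varprojlim^1$ of a tower of finite abelian groups vanishes. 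Your argument misidentifies the obstacle: the suspension compatibility of the geometric pairing is routine, whereas the spectrum-level coherence is not.
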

\begin{proof}
To begin with, $W_p$ is, in the language of Rudyak \cite[Definition III.7.7]{RUD}, a quasi-module spectrum over the Thom spectrum $MSO$. This means that for CW pairs $(X,A)$ and $(Y,B)$ and letting $\Omega^{SO}_*(\cdot)$ be smooth oriented bordism,  there is a pairing $\Omega^{SO}_*(X,A)\otimes \Omega^{\Z_p-\text{Witt}}_*(Y,A)\to \Omega^{\Z_p-\text{Witt}}_*(X\times Y,X\times B\cup A\times Y)$ that possesses  the expected associativity. To see that such a pairing exists, we observe that smooth oriented manifolds are all $\Z_p$-Witt spaces for any $p$ and so is the product of a smooth oriented manifold with a $\Z_p$-Witt space (the links don't change). Furthermore, we should check that if  $M^m$ is a manifold with boundary and $N^n$ is a $\Z_p$-Witt space with $\Z_p$-Witt boundary, then $\bd (M\times N)=M\times \bd N\cup_{\bd M\times \bd N} \bd M\times N$ is $\Z_p$-Witt. The links in $\text{int}(M)\times \bd N$ are the same as the links of $\bd N$, and the links of $\bd M\times N$ are the same as the links of $N$. Now, if $x\in \bd M\times \bd N$, then $x$ has a distinguished neighborhood in this space of the form $\R^{m-1}\times \R^{n-1-k}\times cL$. In $M\times \bd N$, this neighborhood expands to the form  $\R^{m-1}\times \R^{n-1-k}\times cL\times (-1,0]$, where the $(-1,0]$ coordinate represents the collar of $\bd M$ in $M$. But in $\bd M\times N$, this neighborhood similarly expands to $\R^{m-1}\times \R^{n-1-k}\times cL\times [0,1)$. where $[0,1)$ is the collar of $\bd N$ in $N$. Putting these together, $x$ has a neighborhood of the form $\R^{m+ n-1-k}\times cL$, and so again the link of $x$ in $\bd (M\times N)$ is $L$, which is a link in $\bd N$, and hence satisfies the Witt condition. 

It is not true in general that being a quasi-module spectrum leads to being an actual module spectrum (see \cite{RUD}), but we will be able to show this for $W_p$.

We note that both $MSO$ and $W_p$ have finite $\Z$-type, meaning that they are bounded below (i.e. their coefficient groups vanish below a certain dimension) and each $\pi_i(MSO)$ and $\pi_i(W_p)$ is finitely generated. For $MSO$ this is well-known; see, e.g., \cite[Theorem 18.8]{MS}. For $W_p$, this follows from Theorem \ref{T: F-Witt}. Thus, by \cite[Proposition II.4.26.ii]{RUD}, we may assume, up to equivalence, that $MSO$ and $W_p$ have finite type, i.e. that each skeleton of each spectrum is finite (see \cite[Definition II.1.2.e]{RUD}).

We can now apply a module spectrum version of \cite[Theorem III.7.3]{RUD}. It is noted by Rudyak immediately prior to his \cite[Theorem III.7.8]{RUD} that there is such a module spectrum version of his Theorem III.7.3, which gives conditions for when a quasi-ring spectrum is in fact a ring spectrum. The actual hypotheses of his Theorem III.7.8 as stated are more restrictive than those of Theorem III.7.3 only because this is what is needed in the rest of the book, but a version or Theorem III.7.3 in its full generality can be directly applied to the case of quasi-module spectra.  The quasi-module version of this theorem says the following: Since our spectra have finite $\Z$-type, our MSO-quasi-module structure will be induced by a unique (up to homotopy) MSO-module structure on $W_p$ if the following groups vanish: $\ilim^1\{W_p^{-1}(W_p^{(n)})\}$, $\ilim^1\{W_p^{-1}(MSO^{(n)}\wedge W_p^{(n)})\}$, and $\ilim^1\{W_p^{-1}(MSO^{(n)}\wedge MSO^{(n)}\wedge W_p^{(n)})\}$. To clarify, the superscript $-1$ stands, in each case, for the degree $-1$ generalized cohomology group, while $E^{(n)}$ stands for the $n$-skeleton of the spectrum $E$. The existence of a module morphism $MSO\wedge W_p\to W_p$ inducing the module structure on homology is automatic, but the vanishing of these three groups is necessary to yield, respectively, the multiplicative unit, uniqueness, and associativity for the module spectrum structure.

To show that these groups vanish, we consider the Atiyah-Hirzebruch spectral sequences. Let $\mf X^{(n)}$ be any of $W_p^{(n)}$, $MSO^{(n)}\wedge W_p^{(n)}$, or $MSO^{(n)}\wedge MSO^{(n)}\wedge W_p^{(n)}$. Then, for $W_p^{-1}(\mf X^{(n)})$, the relevant $E_2$ terms of the spectral sequence will have the form $H^i(\mf X^{(n)};\Omega^{\Z_p-\text{Witt}}_{-j})$, with $i+j=-1$. For $j>0$, these terms vanish obviously. For $j=0, i=-1$, they also vanish. To see this, note that since $\Omega^{\Z_p-\text{Witt}}_k=\pi_k(W_p)=0$ and $\Omega^{SO}_k=\pi_k(MSO)=0$ for $k<0$, $MSO$ and $W_p$ are connected spectra (this is just the definition of connected spectrum). By \cite[Proposition II.4.5.iv]{RUD}, for any spectrum $E$, the inclusion $E^{(k)}\subset E$ induces isomorphisms on $\pi_i$ for $i\leq k$, and by  \cite[Lemma II.4.2]{RUD}, we may assume (by replacing spectra with equivalent ones) that, if $E$ is connected, $E^{(k)}=*$ for $k\leq -1$. It follows from these two facts that  $MSO^{(n)}$ and $W_p^{(n)}$ are also  connected spectra. Furthermore, by \cite[Proposition II.4.5.i]{RUD}, the smash products of connected spectra are connected. In particular, then, each of our $\mf X^{(n)}$ is connected. So, by \cite[Corollary II.4.7]{RUD}, $H_i(\mf X^{(n)})=0$ for $i<0$, and, by the universal coefficient theorem for spectra with coefficients in an abelian group \cite[Theorem II.4.9]{RUD}, $H^i(\mf X^{(n)};G)=0$ for any abelian group $G$ and any $i\leq -1$. 

Finally, we consider the terms $H^i(\mf X^{(n)};\Omega^{\Z_p-\text{Witt}}_{-j})$ for $j<0$, $i>-1$. As noted above, we can assume up to equivalence that the skeleta $W_p^{(n)}$ and $MSO^{(n)}$ are finite spectra, and, as a consequence of \cite[Proposition II.1.5.ii,iii]{RUD}, the (co)homology of a finite spectrum is the same as the (co)homology of some finite CW complex. Thus, applying the K\"unneth theorem to the smash products $\mf X^{(n)}$ (see the remarks following \cite[Theorem II.4.11]{RUD}), the remaining terms $H^i(\mf X^{(n)};\Omega^{\Z_p-\text{Witt}}_{-j})$, $i>-1$, look like the cohomology of products of finite complexes with coefficients in the finite groups $\Omega^{\Z_p-\text{Witt}}_{-j}$, $j<0$. Therefore, they must vanish for sufficiently large $i$ and will be finite groups otherwise.

It now follows from the Atiyah-Hirzebruch spectral sequence that each $W_p^{-1}(\mf X^{(n)})$ is finite, and this will suffice to make the above $\ilim^1$ groups vanish by \cite[Corollary III.2.18]{RUD}, which notes that $\ilim^i \mf A=0$ for $i>0$ if $\mf A$ is a system of finite abelian groups.
\end{proof}

\bibliographystyle{amsplain}
\bibliography{bib}

Several diagrams in this paper were typeset using the \TeX\, commutative
diagrams package by Paul Taylor.

\end{document}